\theoremstyle{plain}
\newtheorem{theorem}{Theorem}[section]
\newtheorem{proposition}[theorem]{Proposition}
\newtheorem{lemma}[theorem]{Lemma}
\newtheorem{corollary}[theorem]{Corollary}
\theoremstyle{definition}
\newtheorem{definition}[theorem]{Definition}
\newtheorem{remark}[theorem]{Remark}
\DeclareMathOperator{\Tr}{Tr}
\DeclareMathOperator{\HP}{HP}
\DeclareMathOperator{\NP}{NP}
\DeclareMathOperator{\Matrix}{Mat}
\DeclareMathOperator{\Sym}{Sym}
\DeclareMathOperator{\sgn}{sgn}
\DeclareMathOperator{\lcm}{lcm}
\DeclareMathOperator{\sNP}{sNP}
\DeclareMathOperator{\gNP}{gNP}
\DeclareMathOperator{\GNP}{GNP}
\DeclareMathOperator{\ord}{ord}
\DeclareMathOperator{\rig}{rig}
\DeclareMathOperator{\wt}{wt}
\renewcommand{\hat}{\widehat}
\renewcommand{\hat}{\widehat}
\renewcommand{\bar}{\overline}
\newcommand{\pfloor}[1]{\lfloor #1 \rfloor}
\renewcommand{\bar}{\overline}
\newcommand{\A}{\mathbf{A}}
\newcommand{\C}{\mathbf{C}}
\newcommand{\F}{\mathbf{F}}
\renewcommand{\P}{\mathbf{P}}
\newcommand{\Q}{\mathbf{Q}}
\newcommand{\Z}{\mathbf{Z}}
\newcommand{\R}{\mathbf{R}}
\newcommand{\Gm}{\mathbf{G_m}}
\newcommand{\Gmrig}{\mathbf{G_{m,\rig}}}
\newcommand{\Zp}{\mathbf{Z}_p}
\newcommand{\Cp}{\mathbf{C}_p}
\newcommand{\bv}{{\vec{v}}}
\newcommand{\bx}{{\vec{x}}}
\newcommand{\by}{{\vec{y}}}
\newcommand{\br}{{\vec{r}}}
\newcommand{\bi}{{\vec{i}}}
\newcommand{\bj}{{\vec{j}}}
\newcommand{\ha}{\hat{a}}
\newcommand{\bw}{{\vec{w}}}
\newcommand{\cU}{\mathcal{U}}
\newcommand{\cD}{\mathcal{D}}
\newcommand{\cS}{\mathcal{S}}
\newcommand{\cK}{\mathcal{K}}
\newcommand{\cF}{\mathcal{F}}
\newcommand{\cR}{\mathcal{R}}
\newcommand{\cW}{\mathcal{W}}
\newcommand{\cA}{\mathcal{A}}
\begin{document}

\title[Slopes for Artin-Schreier-Witt tower in two variables]
{Generic Newton slopes for Artin-Schreier-Witt tower in two variables}

\author{Hui June Zhu}
\address{
Department of mathematics,
State University of New York at Buffalo,
Buffalo, NY 14260, USA}
\email{hjzhu@math.buffalo.edu}
\date{December 21 2016}

\maketitle

\begin{abstract}
If $\chi_m$ denotes any character of conductor $p^m$,
we prove the existence of global generic Newton slopes of $L$-functions
associated to 
$\chi_m$ of the Galois group $\Zp$ of a family of Artin-Schreier-Witt tower 
in a $2$-variable family.
Let $\A_\Delta$ denote the coefficient space of all polynomials 
$f(x_1,x_2)=\sum_{\vec{v}\in \Delta}a_{\vec{v}}x_1^{v_1}x_2^{v_2}$ 
in two variables
with $x_1$-degree $d_1$ and $x_2$-degree $d_2$.
We prove that there is a Zariski dense open subset $\cU_\Delta$ in $\A_\Delta$ 
defined over $\Q$ such that for any $f$ in $\cU_\Delta(\bar\Q)$
and for every prime $p$ large enough, 
the Newton slopes of $C$-function $C_f^*(\chi_m,s)$ of 
$f\bmod p$ are independent of the choice of $\chi_m$;
the Newton slopes of $L$-function $L_f^*(\chi_m,s)$ are
in weighted arithmetic progression of that of $L_f^*(\chi_1,s)$ for any $m>0$,
generalizing 1-variable result of Davis-Wan-Xiao. 
In this paper we also prove that for any
$f$ in $\cU_\Delta(\bar\Q)$, its corresponding eigencurve 
$\cA_f$ at $p$ is an infinite 
disjoint union of covers of a unit point and 
of degree $(2i-1)d_1d_2+1+\delta_\Delta$ 
for some $0\leq \delta_\Delta< d_1d_2$ over
the $p$-adic weight space for all $i\geq 1$. 
\end{abstract}

\tableofcontents

\section{Introduction}
\label{S:introduction}

Let $p$ be a prime and $q=p^a$ for some positive integer $a$.
Write $\bar{f}=\sum_{\bv\in\Delta}\bar{a}_\bv \bx^{\bv}$ where $\bar{a}_\bv\in \F_q$,
$\bx^\bv:=x_1^{v_1}\cdots x_n^{v_n}$, and 
$\Delta=\Delta(\bar{f})$ is the Newton polytope of $\bar{f}$ in $\R^n$.
Then the {\em Artin-Schreier-Witt tower} associated to $\bar{f}$ is the sequence of curves 
$\cdots\rightarrow A_m(f)\rightarrow A_{m-1}(f)\rightarrow \cdots \rightarrow A_2(f)\rightarrow A_1(f)$ 
over the projective line $\P^1/\F_q$ defined by the  following equations:
$$
A_m(f): \underline{y}^F-\underline{y} = \sum_{\bv\in\Delta} 
(\bar{a}_\bv \bx^\bv,0,\ldots,0) 
$$
where $\underline{y}_m=(y_0, y_1, \cdots,y_{m-1})$ 
is a Witt vector of length $m$, and 
$\bigstar^{F}$ is the endomorphism that raises each Witt coordinate to $p$-th power. 
Following standard notation we denote  $\wp(\underline{x}):=\underline{x}^F-\underline{x}$
for all Witt vector $\underline{x}$ of any length-$m$.
For simplicity, we assume $\sum_{\bv\in \Delta}a_\bv \bx^{\bv}\not\in \wp(\F_q(x_1,\ldots,x_n))$
and the Galois group of the above Artin-Schreier-Witt tower is $\Zp$.
For more information and proofs on Artin-Schreier-Witt extensions, 
see Lang's Algebra book \cite[Exercises of Chapter VI]{Lan02} 
or thorough details in Thomas' thesis \cite[Chapters 1 and 2]{Tho05}.

Let $\chi_m:\Zp\rightarrow \Cp^*$ be a character with {\em conductor} $p^{m}$ for some 
$m\in\Z_{\geq 1}$, namely, $\chi_m(1)=\zeta_{p^{m}}$ for some primitive $p^{m}$-th root 
of unity $\zeta_{p^m}$.
Let $\hat{f}$ be the Teichm\"uller lifting of $\bar{f}$ to $\Z_q$ coefficientwise.
The $L$-function of $\bar{f}$ with respect to $\chi_m$ is 
$$
L^*_f(\chi_m,s)=\prod_{\bx\in \Gm^n(\F_q)}\frac{1}{1-\chi_m(\Tr_{\Q_{q^{\deg(\bx)}}/\Q_p}(\hat{f}(\hat{\bx})) )s^{\deg(\bx)}}.
$$
From now on we assume $\bar{f}$ is regular 
with respect to $\Delta$
in the sense of 
Adolphson-Sperber \cite{AS89} 
so 
that 
$L^*_f(\chi_m,s)^{-1}$ is a polynomial of degree $p^{n(m-1)}n!V(\Delta)$
in variable $s$,
where $V(\Delta)$ is the volume of $\Delta$ in $\R^n$.
This is due to \cite{AS89} when $m=1$ and \cite[Theorem 1.3]{LW07}) when $m>1$. 

Main result in this paper is summarized in the 
following theorems. 
We write $\NP_q(-)$ for the 
$q$-adic Newton polygon of a polynomial in variable $s$ below
if $\bar{f}$ is over $\F_q$.
Let $\Delta$ be the Newton polytope with vertices $(0,0)$, $(d_1,0)$, $(0,d_2)$ and $(d_1,d_2)$,
where $d_1,d_2\geq 3$.
Let $\A_\Delta$ denote the space of all polynomials $f=\sum_{\bv\in \Delta} a_{\bv}\bx^{\bv}$.
It is known that for each prime $p$ there exists a generic Newton polygon 
for all polynomials $f$ in $\A_\Delta(\bar\F_p)$ for the traditional $L$-function $L_f^*(\chi_1,s)$. 
But these generic spaces over $\bar\F_p$ for different prime $p$ 
have no known connection. In this paper we discover a global generic set in 
$\A_\Delta$ whose special fiber gives the generic Newton polygon 
at almost all prime $p$. 

\begin{theorem}
\label{T:1}
Fix $d_1,d_2\geq 3$ that defines $\Delta$.
For each nontrivial residue $r\bmod \lcm(d_1,d_2)$, 
there exists an explicit piece-wise linear lower convex function  
$\GNP_L(\Delta,t)$ on the interval $[0,2d_1d_2]$, and 
a Zariski dense open subset $\cU_\Delta$ in $\A_\Delta$ defined over $\Q$,
such that for all $f\in \cU_\Delta(\bar\Q)$ and for $p\equiv r\bmod \lcm(d_1,d_2)$ large enough 
$$
\NP_q(L^*_f(\chi_1, s)^{-1})=\GNP_L(\Delta,p).
$$
\end{theorem}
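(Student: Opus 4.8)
The plan is to pass from the $q$-adic Newton polygon of $L^*_f(\chi_1,s)^{-1}$ to Dwork cohomology, bound it below by the Hodge polygon of $\Delta$, and then pin down the generic polygon via Wan's facial decomposition, which collapses the two‑variable problem to one‑variable exponential sums whose generic Newton polygons are controlled uniformly in $p$; the serious point is to produce a single generic locus defined over $\Q$. \emph{Cohomological set-up.} Since $\bar f$ is regular with respect to $\Delta$ and $\chi_1$ is a nontrivial additive character of $\F_p$, the Dwork trace formula together with the Adolphson--Sperber cohomology theory expresses $L^*_f(\chi_1,s)^{-1}$, up to a monomial and trivial factors, as the characteristic polynomial of the Dwork--Frobenius operator $\alpha$ on the top cohomology of the Koszul--Dwork complex attached to $(\hat f,\Delta)$, a $\Qq$-space of dimension $n!V(\Delta)=2d_1d_2$. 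On the monomial basis indexed by $C(\Delta)\cap\Z^2$, the operator $\alpha$ has an infinite matrix obeying the Hodge estimate on the $p$-adic orders of its entries, so $\NP_q(L^*_f(\chi_1,s)^{-1})$ lies on or above an explicit piecewise-linear lower-convex function $\HP(\Delta)$ on $[0,2d_1d_2]$ with the same endpoints; and by Wan's semicontinuity/specialization there is, for each $p$, a Zariski dense open $\cU_p\subseteq\A_\Delta(\bar\F_p)$ on which $\NP_q$ equals a fixed polygon $\GNP_p$, cut out by the non-vanishing of explicit determinantal leading forms of that matrix.

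\emph{Facial decomposition and reduction to one variable.} The box $\Delta$ has the origin as a vertex; its facets not meeting the origin are the edges $\sigma_1=\{d_1\}\times[0,d_2]$ and $\sigma_2=[0,d_1]\times\{d_2\}$, and $C(\Delta)$ is the union of the cones over $\Delta_1=\mathrm{conv}(\{\0\}\cup\sigma_1)$ and $\Delta_2=\mathrm{conv}(\{\0\}\cup\sigma_2)$, each with $n!V(\Delta_i)=d_1d_2$. By Wan's facial decomposition theorem, for $f$ in a Zariski dense open subset of $\A_\Delta$ the polygon $\NP_q(L^*_f(\chi_1,s)^{-1})$ is the lower convex hull of the merged slope sequences of the generic Newton polygons attached to $(f|_{\sigma_1},\Delta_1)$ and $(f|_{\sigma_2},\Delta_2)$. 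Here $f|_{\sigma_1}=x_1^{d_1}g_1(x_2)$ and $f|_{\sigma_2}=x_2^{d_2}g_2(x_1)$ with $g_1,g_2$ generic of degrees $d_2,d_1\ (\geq 3)$. Since $\wt$ is constant along the $u_2$-direction on $C(\Delta_1)$ and along the $u_1$-direction on $C(\Delta_2)$, the Frobenius block of $\Delta_i$ splits over the fibres of that coordinate, and the Gauss-sum expansion of $\sum\psi(\Tr(x_1^{d_1}g_1(x_2)))$ (symmetrically for $\sigma_2$) assembles the Newton polygon of $(f|_{\sigma_i},\Delta_i)$ from Stickelberger slopes of Gauss sums of orders dividing $\gcd(d_i,p-1)$ together with the generic Newton polygons of one-variable twisted exponential sums of degree $d_{3-i}$.

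\emph{The one-variable input and the function $\GNP_L(\Delta,t)$.} For the one-variable constituents, the explicit generic Newton polygon and the fact that it depends on $p$ only through $p\bmod d_i$ once $p$ is large, together with a $\Q$-rational Zariski dense open in the one-variable coefficient space on which it is attained, are furnished by the one-variable theory (Zhu; Davis--Wan--Xiao); the Gauss-sum slopes are explicit in $p$ by Stickelberger. Substituting $t=p$ and forming the lower convex hulls prescribed above produces the asserted piecewise-linear lower-convex function $\GNP_L(\Delta,t)$ on $[0,2d_1d_2]$; it depends on $p$ through $p\bmod\lcm(d_1,d_2)$ --- the combined modulus of the two faces --- and equals $\GNP_p$ for $p\equiv r\bmod\lcm(d_1,d_2)$ with $p$ large.

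\emph{Globalization over $\Q$, and the main obstacle.} It remains to replace the family $\{\cU_p\}_p$ by one Zariski dense open $\cU_\Delta\subseteq\A_\Delta$ defined over $\Q$, and this is the hard part. The plan is a $p$-uniform analysis of the Dwork matrix: expanding its entries as power series in the Teichm\"uller coordinates $\hat a_{\bv}$, the ``Hodge-level'' leading coefficient of each entry --- hence of each minor of the size needed to read off $\GNP_L(\Delta,p)$ --- becomes, after dividing by an explicit integer (a product of factorials and prime powers bounded solely in terms of $d_1,d_2$, hence prime to $p$ once $p$ is large), a universal polynomial in the $a_{\bv}$ with coefficients in $\Z[1/N]$ independent of $p$ for $p\equiv r$. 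Non-vanishing of the relevant minor forces $\NP_q(L^*_f(\chi_1,s)^{-1})=\GNP_L(\Delta,p)$; by the reduction above this leading form factors through the one-variable leading forms, which are not identically zero over $\Q$, hence neither is it. Let $\cU_\Delta$ be its non-vanishing locus intersected with the (also $\Q$-defined, Zariski open) Adolphson--Sperber regularity locus. Then for every $f\in\cU_\Delta(\bar\Q)$ and every prime $p\equiv r\bmod\lcm(d_1,d_2)$ large enough, $f\bmod p\in\cU_p$, so $\NP_q(L^*_f(\chi_1,s)^{-1})=\GNP_L(\Delta,p)$. The principal difficulty is exactly this last step: the uniform-in-$p$ control of denominators in the Dwork matrix entries and the non-degeneracy of the universal leading form --- the two-variable analogue of the technical core of Zhu's one-variable asymptotic work, where the facial decomposition is what keeps the relevant minors small enough to analyse.
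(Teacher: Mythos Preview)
Your last paragraph --- extract the universal leading form of each relevant Dwork minor, clear a $p$-unit factorial denominator to land in $\Z[a_\bv]$, and take $\cU_\Delta$ to be its non-vanishing locus --- is exactly the paper's architecture (see $G_{k_n,\cR}$ in (\ref{E:GknR}), Proposition~\ref{P:Hkn}, Proposition~\ref{P:gNP}). The gap is in how you propose to prove that leading form is nonzero.

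Two concrete problems with the one-variable reduction. First, Wan's facial decomposition theorem is an \emph{ordinarity} criterion: $\NP(f,\Delta)=\HP(\Delta)$ if and only if each $\NP(f^{\delta_i},\Delta_i)=\HP(\Delta_i)$. It does not assert that $\GNP(\Delta,p)$ is the shuffle of the $\GNP(\Delta_i,p)$, nor that genericity on each face forces genericity on $\Delta$. The paper does invoke a ``generic facial decomposition'' from \cite{Zh12}, but only at the matrix level: the $\pi$-adically dominating permutations in $H_{k_n}$ are those stabilizing the two simplices $(\cF_n)_1$ and $(\cF_n)_2$ (Proposition~\ref{P:Hkn}). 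That is far weaker than an $L$-function factorization. Second, your Gauss-sum expansion of $\sum_{x_1,x_2}\psi(x_1^{d_1}g_1(x_2))$ gives, after summing out $x_1$, sums of the shape $\sum_{x_2}\bar\eta(g_1(x_2))$ for multiplicative characters $\eta$ of order dividing $\gcd(d_1,q-1)$. These are pure multiplicative-character sums, not the additive (or monomially twisted additive) one-variable sums whose generic Newton polygons and $\Q$-rational generic loci are supplied by \cite{Zh03,Zh04,DWX16}. Correspondingly, your assertion that the two-variable leading form ``factors through the one-variable leading forms'' is false as stated: by Lemma~\ref{L:b} and (\ref{E:GknR}), $G_{k_n,\cR}$ is a polynomial in the variables $a_{\vec r}$ for $\vec r$ ranging over all of $[0,d_1-1]\times[0,d_2-1]$ --- interior coefficients included --- whereas the facial restrictions $f^{\delta_1},f^{\delta_2}$ see only the boundary coefficients $a_{d_1,j}$ and $a_{i,d_2}$.

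The paper's fix is to prove non-vanishing of $G_{k_n,\cR}$ directly, with no one-variable input: Lemma~\ref{L:open} puts a total order $\Xi$ on $\Z/d_1\oplus\Z/d_2$ (weight first, then lexicographic), extends it to monomials, and then constructs by a greedy scan of the residue matrix $[\,\overline{p\bi-\bj}\,]_{\bi,\bj\in\cF_n}$ a single permutation $\sigma$ whose monomial $\prod_{\bi}a_{\vec r_\sigma}$ has uniquely maximal $\Xi$-order among all contributing permutations, hence cannot cancel. This combinatorial step, bounding $\ell<D^2$, is what replaces your appeal to one-variable theory and is the genuine two-variable content of the proof.
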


Description of $\GNP_L(\Delta,t)$ can be found in Definition \ref{D:GNP}.
In this paper we use $\coprod$ to denote disjoint union of slopes 
with multiplicity in exponents.
Given a set of rational numbers $\GNP_L(\Delta,t)=\coprod_{j=1}^{2d_1d_2}\alpha_j$,
its {\em $t^m$-adic weighted arithmetic progression} for two-variable case is
the following set
$$
\GNP_L(\Delta,m,t):=\coprod_{i=1}^{t^{m-1}-1}\coprod_{j=1}^{2d_1d_2}
(\frac{i-1+\alpha_j}{t^{m-1}})^{N(i)} 
$$  
where $N(i)=i$ for $1\leq i\leq t^{m-1}$ and $N(i)=2t^{m-1}-i$
for $t^{m-1}<i\leq 2t^{m-1}-1$. 

\begin{theorem}\label{T:arithmeticProg}
Let notation be as in Theorem \ref{T:1}.
If the slopes in $\GNP_L(\Delta,p)$ 
are $\coprod_{j=1}^{2d_1d_2}\alpha_j$,
then for any $f\in \cU_\Delta(\bar\Q)$ and $p\equiv r\bmod \lcm(d_1,d_2)$ 
large enough
$$
\NP_q(L_f^*(\chi_m,s)^{-1})
=\GNP_L(\Delta,m,p)=
\coprod_{i=1}^{p^{m-1}-1}\coprod_{j=1}^{2d_1d_2}
(\frac{i-1+\alpha_j}{p^{m-1}})^{N(i)} 
$$  
where $N(i)=i$ for $1\leq i\leq p^{m-1}$ and $N(i)=2p^{m-1}-i$
for $p^{m-1}<i\leq 2p^{m-1}-1$.
\end{theorem}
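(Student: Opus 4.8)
The plan is to reduce the $\chi_m$-statement to the $\chi_1$-statement of Theorem \ref{T:1} via the comparison between the $L$-function and an auxiliary $C$-function, exactly as in the one-variable work of Davis--Wan--Xiao. First I would recall the factorization of the $L$-function in the Artin-Schreier-Witt tower: for each $m\geq 1$ one has $L_f^*(\chi_m,s)^{-1}$ dividing a product of twisted $L$-functions, and conversely $C_f^*(\chi_m,s) = \prod_{1\leq k\leq m} L_f^*(\chi_k,s)^{(\pm)}$ up to trivial factors, where the $C$-function is the one whose Newton polygon behaves linearly in $p^{m-1}$. The key input from the abstract's setup is that the Newton slopes of $C_f^*(\chi_m,s)$ are \emph{independent of the choice of $\chi_m$}; granting that (it is asserted to be proved in the paper), the slopes of $C_f^*(\chi_m,s)$ are determined by those of $C_f^*(\chi_1,s)$, and the latter are controlled by $\GNP_L(\Delta,p)$ through Theorem \ref{T:1}.

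Next I would carry out the bookkeeping that turns the $C$-function slopes into the stated weighted arithmetic progression. The mechanism is a Hodge-type / Frobenius-structure argument on the relevant Dwork cohomology: the $\chi_m$-twisted Frobenius acts on a space that, after the appropriate normalization, splits into $p^{m-1}$ "blocks," the $i$-th block contributing slopes shifted by $(i-1)/p^{m-1}$ and rescaled by $1/p^{m-1}$ relative to the $\chi_1$-slopes $\alpha_1,\dots,\alpha_{2d_1d_2}$. The multiplicities $N(i)$ — namely $N(i)=i$ for $1\leq i\leq p^{m-1}$ and $N(i)=2p^{m-1}-i$ for $p^{m-1}<i\leq 2p^{m-1}-1$ — are precisely the number of lattice points of the dilated polytope $\Delta$ sitting on the appropriate diagonal slices, which is where the two-variable geometry (a product of two intervals, so $\Delta$ a rectangle) enters: the diagonal counting function of a $d_1\times d_2$ rectangle dilated by $p^{m-1}$ is exactly this triangular/trapezoidal profile. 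I would verify the degree count $\sum_i N(i)\cdot 2d_1d_2 = p^{2(m-1)}\cdot 2! \cdot V(\Delta) = p^{2(m-1)}\cdot 2 d_1 d_2$, which matches the degree of $L_f^*(\chi_m,s)^{-1}$ from \cite{LW07}, as a consistency check that no slopes are lost.

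Then I would assemble the pieces: for $p\equiv r \bmod \lcm(d_1,d_2)$ large enough and $f\in\cU_\Delta(\bar\Q)$, Theorem \ref{T:1} gives $\NP_q(L_f^*(\chi_1,s)^{-1})=\GNP_L(\Delta,p)$; the chi-independence of the $C$-function slopes propagates this to all $m$; and the block decomposition rewrites $\NP_q(L_f^*(\chi_m,s)^{-1})$ as $\coprod_{i=1}^{p^{m-1}-1}\coprod_{j=1}^{2d_1d_2}\bigl(\tfrac{i-1+\alpha_j}{p^{m-1}}\bigr)^{N(i)}$, which is the definition of $\GNP_L(\Delta,m,p)$. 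The same Zariski-dense open set $\cU_\Delta$ works for all $m$ simultaneously because the obstruction to genericity lives entirely at the level of the $C_f^*(\chi_1,s)$ (equivalently the $\chi_1$-cohomology), and no new generic conditions are imposed by passing up the tower.

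The main obstacle I expect is establishing — or rather, correctly invoking — the $\chi$-independence of the $C$-function Newton polygon and the clean block/facial decomposition that produces the weights $N(i)$. In the one-variable setting of Davis--Wan--Xiao this rests on a delicate analysis of the Frobenius matrix over the weight-space disc (a "local-to-global" or specialization argument, using that the characteristic power series over the full weight space has slopes governed by a single Hodge polygon). Transporting this to two variables requires that the relevant Dwork-type complex still has a well-behaved Hodge filtration compatible with the rectangular polytope, and that the estimates controlling the $p$-adic valuations of the Frobenius entries (the analogue of the "Hodge bound is achieved generically" statement) survive in the two-variable regime — this is presumably where the hypothesis $d_1,d_2\geq 3$ and "$p$ large enough" are consumed. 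I would isolate this as a lemma (the two-variable analogue of the Davis--Wan--Xiao chi-independence result) and treat the rest of the proof as the combinatorial/linear-algebra consequence outlined above.
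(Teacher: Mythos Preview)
Your overall strategy matches the paper's: the key input is indeed the $\chi$-independence of $\NP_{\pi_\chi^{a(p-1)}} C_f^*(\chi,s)$ (this is Theorem~\ref{T:GNP2}, proved via the $T$-adic interpolation of Section~2), and once that is in hand the $L$-function slopes for every $\chi_m$ follow by bookkeeping. But the bookkeeping mechanism in the paper is more elementary than what you describe. There is no cohomological ``block decomposition'' of the $\chi_m$-twisted Frobenius, and the multiplicities $N(i)$ are not obtained by counting lattice points in a dilated rectangle. The paper simply uses the explicit two-variable product formula
\[
L_f^*(\chi_m,s)^{-1} \;=\; \frac{C_f^*(\chi_m,s)\,C_f^*(\chi_m,q^2s)}{C_f^*(\chi_m,qs)^2}
\]
from \eqref{E:relation1}, together with the valuation identity $\ord_q = p^{-(m-1)}\ord_{\pi_{\chi_m}^{a(p-1)}}$. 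The $q$-adic slopes of $C_f^*(\chi_m,s)$ are therefore $\GNP_C(\Delta,p)$ scaled by $1/p^{m-1}$, and after substitution the multiplicity of the slope $\tfrac{i-1+\alpha_j}{p^{m-1}}$ telescopes as $i - 2(i-p^{m-1})_+ + (i-2p^{m-1})_+ = N(i)$. Your lattice-point picture happens to give the same triangular profile, but it is a combinatorial coincidence rather than the argument.

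One concrete error to fix: your stated $L$--$C$ relation ``$C_f^*(\chi_m,s)=\prod_{1\le k\le m} L_f^*(\chi_k,s)^{\pm}$'' is not correct. The product defining $C_f^*$ runs over $q$-shifts of the variable, $C_f^*(T,s)=\prod_{j\ge 0} L_f^*(T,q^js)^{-(j+1)}$, not over characters of smaller conductor; the character $\chi_m$ stays fixed throughout. This misstatement is what leads you to look for a block decomposition indexed by levels of the tower, when in fact everything happens at a single level with shifts in $s$.
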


In the recent paper \cite{DWX16}, zeta function of Artin-Shreier-Witt covers 
in one variable is studied, that is, $f(x)$ is a 1-variable polynomial.
To some extent, the above theorem is 
a two-dimensional generalization of the main theorem in \cite[Theorem 1.2]{DWX16}, 
answering Question 1.8 of \cite{DWX16}. 
There are some noted difference between our 
2-variable result and their 1-variable one that we shall point out here. 
While Davis-Wan-Xiao proves that 
for any fixed prime $p$ the $p$-adic Newton slopes 
form an arithmetic progression for all $m$ large enough, 
our result fixes any positive integer $m$, and 
proves an arithmetic progress for all prime $p$ large enough. 
Their result applies to all 1-variable polynomials, while 
our result is for generic 2-variable polynomials.
Finally, their 1-variable case is arithmetic progression 
while for 2-variable case it is weighted arithmetic progression. 
Theorem \ref{T:1} also yields a direct generalization 
of the main theorems in \cite{Zh03} and \cite{Zh04}.

Artin-Schreier-Witt cover $A_m(f)$
has the following zeta function
$$
Z(A_m(f),s)=\exp(\sum_{k\ge 1}\# A_m(f)(\F_{q^k})\frac{s^k}{k}).
$$
If one writes it as 
$$Z(A_m(f),s)=\frac{1}{(1-qs)\prod_{k=1}^{m}\prod_{\chi}L_f^*(\chi,s)^{-1}}$$
where the second product ranges over all characters of
conductor equal to $p^k$, it is not hard to 
derive an immediate application of Theorems \ref{T:1} and 
\ref{T:arithmeticProg}:

\begin{corollary}\label{C:1}
Let $f\in\cU_\Delta(\bar\Q)$ and let $p$ be large enough, then 
$$
\NP_q((1-qs)Z(A_m(f),s)^{-1}) = \coprod_{k=1}^{m} \GNP_L(\Delta,k,p)^{p^{k-1}(p-1)}.
$$
\end{corollary}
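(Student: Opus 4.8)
The plan is to derive Corollary \ref{C:1} directly from the factorization of the zeta function together with Theorems \ref{T:1} and \ref{T:arithmeticProg}. Starting from
$$
Z(A_m(f),s)=\frac{1}{(1-qs)\prod_{k=1}^{m}\prod_{\chi}L_f^*(\chi,s)^{-1}},
$$
where for each $k$ the inner product runs over all characters $\chi$ of conductor exactly $p^{k}$, I would first observe that
$$
(1-qs)Z(A_m(f),s)^{-1}=\prod_{k=1}^{m}\prod_{\chi:\,\mathrm{cond}(\chi)=p^k}L_f^*(\chi,s)^{-1}.
$$
The Newton polygon of a product of polynomials over $\Q_q$ is the "sum" (lower convex hull of the Minkowski sum of slope-multisets, equivalently the concatenation of slopes sorted in increasing order) of the individual Newton polygons, so
$$
\NP_q\!\left((1-qs)Z(A_m(f),s)^{-1}\right)=\coprod_{k=1}^{m}\ \coprod_{\chi:\,\mathrm{cond}(\chi)=p^k}\NP_q\!\left(L_f^*(\chi,s)^{-1}\right).
$$

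Next I would invoke Theorem \ref{T:arithmeticProg}: for $f\in\cU_\Delta(\bar\Q)$ and $p$ large enough (with $p$ in the appropriate residue class mod $\lcm(d_1,d_2)$, which I should state or absorb into "large enough"), the Newton slopes of $L_f^*(\chi_k,s)^{-1}$ are given by $\GNP_L(\Delta,k,p)$ and in particular depend only on the conductor $p^k$ of the character, not on the choice of $\chi_k$ among characters of that conductor. The number of characters of $\Zp$ of conductor exactly $p^k$ is $\varphi(p^k)=p^{k-1}(p-1)$. Therefore the contribution of the level-$k$ block to the total slope multiset is exactly $p^{k-1}(p-1)$ copies of $\GNP_L(\Delta,k,p)$, which in the paper's exponent notation for multiplicities is $\GNP_L(\Delta,k,p)^{p^{k-1}(p-1)}$. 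Assembling over $k=1,\dots,m$ yields
$$
\NP_q\!\left((1-qs)Z(A_m(f),s)^{-1}\right)=\coprod_{k=1}^{m}\GNP_L(\Delta,k,p)^{p^{k-1}(p-1)},
$$
which is the claimed identity.

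There is essentially no hard step here; the corollary is a bookkeeping consequence of the two main theorems. The only two points that require a word of care are: first, that "the Newton polygon of a product is the join of the Newton polygons" is valid here because each $L_f^*(\chi,s)^{-1}$ is a genuine polynomial over $\Q_q$ (with the degree formula $p^{n(m-1)}n!V(\Delta)$ recalled in the introduction), so there is no cancellation or convergence subtlety — this is the standard multiplicativity of $p$-adic Newton polygons under multiplication of polynomials; and second, that the character count $\#\{\chi:\mathrm{cond}(\chi)=p^k\}=p^{k-1}(p-1)$ is exactly $\varphi(p^k)$, which is where the exponent in the statement comes from. I would also make sure the phrase "$p$ large enough" in the corollary is understood to include the residue condition $p\equiv r\bmod\lcm(d_1,d_2)$ from Theorems \ref{T:1} and \ref{T:arithmeticProg}, since the shape of $\GNP_L(\Delta,k,p)$ depends on that residue; alternatively one states the corollary for each residue class $r$ separately, mirroring the phrasing of the theorems it rests on.
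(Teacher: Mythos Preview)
Your proposal is correct and matches the paper's approach exactly: the paper does not give a written-out proof but states just before the corollary that it ``is not hard to derive'' from the factorization $Z(A_m(f),s)=\big((1-qs)\prod_{k=1}^{m}\prod_{\chi}L_f^*(\chi,s)^{-1}\big)^{-1}$ together with Theorems~\ref{T:1} and~\ref{T:arithmeticProg}, which is precisely the derivation you carry out. One small wording correction: the coefficients of $L_f^*(\chi,s)^{-1}$ lie in $\Q_q(\zeta_{p^k})$ rather than $\Q_q$ itself, but this does not affect your argument since the $q$-adic valuation extends uniquely and multiplicativity of Newton polygons holds over any discretely valued field.
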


Let $\cA_f$ be the eigencurve associated to the Artin-Schreier-Witt tower 
defined in \cite[Theorem 1.5]{DWX16}.
For 1-variable polynomial $f$, it was shown by Davis-Wan-Xiao that 
when restricted to the boundary of the weight space,
the eigencurve $\cA_f$ is an infinite disjoint union of subspaces which are finite 
and flat over the weight annulus; in particular, its Newton slopes 
are in arithmetic progressions. This property clearly mirrors that of Coleman-Mazur's eigencurve \cite{CM98}. 
See also \cite{BG15} for 
survey of other related recent progress and conjectures. 
Yet there is  no available explicit conjecture upon 
similar properties for eigencurve over 2-dimensional case.

\begin{theorem}
\label{T:eigen}
Let notation be as in Theorem \ref{T:1}.
For $f\in \cU_\Delta(\bar\Q)$ and $p$ large enough, 
the corresponding eigencurve $\cA_f$ at $p$ is an infinite disjoint union 
of finite flat covers over the weight space with degree $1$ and 
$(2i-1)d_1d_2+1+\delta_\Delta$ 
for some $0\leq \delta_\Delta\leq d_1d_2-1$ for all $i\geq 1$.
\end{theorem}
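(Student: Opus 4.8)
The plan is to reduce Theorem~\ref{T:eigen} to the slope information already extracted in Theorems~\ref{T:1} and \ref{T:arithmeticProg}. Recall that the eigencurve $\cA_f$ of \cite[Theorem 1.5]{DWX16} is built from the characteristic power series $C_f^*(\chi,s)$ attached to the Artin-Schreier-Witt tower, viewed as a function of the character $\chi$ ranging over the $p$-adic weight space $\cW$; fibering over a point of $\cW$ corresponding to a character of conductor $p^m$ recovers the $C$-function $C_f^*(\chi_m,s)$, whose reciprocal zeros are the reciprocal zeros of $\prod_{k=1}^m L_f^*(\chi_k,s)^{-1}$ together with a trivial factor. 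So the first step is to make precise the dictionary between the slopes of $C_f^*(\chi_m,s)$ and the quantities $\GNP_L(\Delta,k,p)$: by Theorem~\ref{T:arithmeticProg} and the product decomposition, for $f\in\cU_\Delta(\bar\Q)$ and $p$ large the Newton polygon of $C_f^*(\chi_m,s)^{-1}$ is $\coprod_{k=1}^{m}\GNP_L(\Delta,k,p)$ up to the unit part, and this is independent of which conductor-$p^m$ character $\chi_m$ we chose (this independence is exactly the first assertion of the abstract about $C_f^*(\chi_m,s)$).

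Next I would analyze the combinatorial structure of the multiset $\bigsqcup_{k\ge 1}\GNP_L(\Delta,k,p)$ to read off how it decomposes into ``blocks'' indexed by $i\ge 1$. Writing $\GNP_L(\Delta,p)=\coprod_{j=1}^{2d_1d_2}\alpha_j$, the definition of $\GNP_L(\Delta,m,p)$ groups slopes lying in the half-open interval $[i-1,i)$ (after scaling) with multiplicity $N(i)$, so summing over all $m$ one finds that the slopes in a fixed unit interval $[i-1,i)$ across the whole tower form a finite packet. The count of such slopes, together with the single slope-$0$ (unit) contribution, is what should produce the degrees $1$ and $(2i-1)d_1d_2+1+\delta_\Delta$: the coefficient $2d_1d_2=n!V(\Delta)$ is the generic number of slopes of $L_f^*(\chi_1,s)^{-1}$, the factor $(2i-1)$ comes from counting how many scaled copies of the base polygon fall into the $i$-th window (this is the weighted-progression analogue of the linear growth seen in the $1$-variable Coleman--Mazur picture), and $\delta_\Delta$ is the boundary correction coming from slopes that sit exactly at an integer or from the lowest/highest vertices of $\Delta$. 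I would compute $\delta_\Delta$ explicitly from the description of $\GNP_L(\Delta,t)$ in Definition~\ref{D:GNP} and check $0\le\delta_\Delta\le d_1d_2-1$.

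The finiteness and flatness over $\cW$ of each piece is then obtained exactly as in \cite{DWX16}: once we know the Newton polygon of $C_f^*(\chi,s)$ is constant (equal to the generic polygon) on each connected component of the relevant region of weight space for all large $p$, the Weierstrass preparation / Riesz decomposition of the Fredholm power series $C_f^*$ over the appropriate affinoid yields a factorization $C_f^* = \prod_i Q_i \cdot U$ with each $Q_i$ a polynomial of the stated degree whose coefficients are in the ring of the weight space and whose leading coefficient is a unit, hence defines a finite flat cover; the disjointness of the spectra of the $Q_i$ is forced by the slopes of distinct blocks lying in distinct unit intervals. I would invoke \cite[Theorem 1.5]{DWX16} (or its two-variable analogue established earlier in the paper) for the construction of $\cA_f$ and its compatibility with base change to conductor-$p^m$ points, so that the fiberwise slope data from Theorem~\ref{T:arithmeticProg} propagates to the family.

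The main obstacle I expect is the passage from \emph{fiberwise} slope stability (which Theorems~\ref{T:1}--\ref{T:arithmeticProg} give us only at the special points of $\cW$ corresponding to actual finite-order characters $\chi_m$) to \emph{uniform} slope stability over an entire affinoid subdomain of the weight space, which is what is needed for the Weierstrass factorization to have coefficients in the weight ring rather than just pointwise. In the $1$-variable setting of \cite{DWX16} this uniformity is handled by working over the boundary annulus of $\cW$, where the Dwork-theoretic estimates on the Frobenius matrix are uniform in the weight parameter; here I would need the analogous uniform lower bounds on the Hodge-type polygon of the two-variable Frobenius operator (from the Adolphson--Sperber machinery and the explicit basis used to prove Theorem~\ref{T:1}), and I would have to check that the genericity locus $\cU_\Delta$ and the ``$p$ large enough'' condition can be chosen so that these bounds hold simultaneously for all weights in the affinoid. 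Establishing that uniform estimate — essentially showing that the generic Newton polygon computation of Theorem~\ref{T:1} is robust under perturbing the character within a residue disc — is the technical heart of the argument; the rest is bookkeeping with $N(i)$ and $\delta_\Delta$.
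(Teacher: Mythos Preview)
Your overall strategy---use the slope information to apply Weierstrass preparation and factor $C_f^*$ into pieces whose slopes lie in disjoint unit intervals---matches the paper's. However, two points need correction.

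First, your relation between $C_f^*(\chi_m,s)$ and the $L$-functions is wrong: $C_f^*(\chi_m,s)$ is \emph{not} built from $\prod_{k=1}^m L_f^*(\chi_k,s)^{-1}$ (a product over characters of varying conductor). For a fixed character $\chi_m$, equation~(\ref{E:relation1}) gives $C_f^*(\chi_m,s) = \prod_{j\ge 0} L_f^*(\chi_m, q^j s)^{-(j+1)}$; the product is over shifts $s\mapsto q^j s$ with the \emph{same} character throughout. You are conflating $C_f^*$ with the zeta function $Z(A_m(f),s)$ of Corollary~\ref{C:1}, which does involve all conductors $\le p^m$.

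Second, and more importantly, the ``main obstacle'' you identify---passing from fiberwise slope stability at finite-order points $\chi_m$ to uniform stability over an affinoid in $\cW$---simply does not arise in the paper's argument, because the paper never works fiberwise. Theorem~\ref{T:GNP2} (restated in Theorem~\ref{T:6}) computes the $T$-adic Newton polygon $\NP_{T^{a(p-1)}} C_f^*(T,s) = \GNP_C(\Delta,p)$ directly for the power series $C_f^*(T,s)\in\Z_p[[T]][[s]]$. Weierstrass preparation over $\Z_p[[T]]$ then factors $C_f^*(T,s)$ according to the segments of this $T$-adic polygon, with no interpolation from special fibers required. The degree count $(2i-1)d_1d_2 + 1 + \delta_\Delta$ follows by grouping the slopes of $\GNP_C(\Delta,p)$ (Remark~\ref{Remark:vertices}) lying in $(i-1,i]$; the correction $\delta_\Delta$ equals $W_\Delta(D)$ or $0$ according to whether $\varepsilon_D = 0$, i.e., whether $\GNP_L(\Delta,p)$ has a genuine slope-$1$ segment. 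So your detour through Theorem~\ref{T:arithmeticProg} and the fiberwise-to-family passage is unnecessary: the $T$-adic computation already carried out earlier in the paper is precisely the uniform statement you were worried about establishing.
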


Research in 1-variable case has witnessed numerous progresses recently:
Ouyang and Yang proved in \cite{OY15} the one-variable case of 
Theorem \ref{T:1}  for $f(x)=x^d+ax$. 
See also \cite{OZ16} for result by Ouyang and Zhang on the family 
$f(x)=x^d+ax^{d-1}$ in a special congruence class of $p$.
Ren-Wan-Xiao-Yu \cite{RWXY} has given analogous 
generalization for higher rank Artin-Schreier-Witt towers, that is, when 
$\Z_p$-extension is generalized to $\Z_{p^\ell}$-extension;
Kosters-Wan \cite{KW16} has formulated an explicit 
class field theory's Schmidt-Witt symbol and a genus bound  for an
Artin-Schreier-Witt towers. On a parallel line of research, recent progresses
made on the Newton slopes of eigencurves in 
\cite{BP15},\cite{BP16},\cite{LWX14}, and \cite{WXZ14} have 
also enriched our understanding of eigencurves in 
our Artin-Schreier-Witt setting. 

Our paper is organized as follows:
Section 2 recalls basic notations of Dwork-Liu-Wan's $T$-adic $L$-functions
and produces a $p$- and $T$-adic interpolation of its Newton polygon for 
generic polynomial in two variables in $\A_\Delta(\bar\F_p)$.
As applications of these interpolations,
we go on to prove the main results of this paper, Theorems \ref{T:1}
and \ref{T:arithmeticProg} 
are proved in Theorem \ref{T:6}. Application to eigencurves relative to Artin-Schreier-Witt covers is discussed in 
Section 4, where Theorem \ref{T:eigen} is proved.

\section{Interpolating Newton slopes of character power series}

We first recall $L$-functions $L_f^*(-,s)$ 
and character power series 
 $C_f^*(-,s)$ (among other things) 
for any $n$-variable polynomial function $f$.
In Section 2.2, we investigate 
polynomial functions (which will turn out to be coefficients of 
$C_f^*$ later) and split it into a product of $p$-adic part
and $T$-adic part, a key step for our interpolation of $C_f^*$.
We apply this finding in $p$-adic and $T$-adic 
interpolation of Newton slopes of $C_f^*$ and $L_f^*$ in Section 2.3.

\subsection{Characteristic power series}
\label{SS:Dwork}

For any positive integer $k$ the exponential sum of a function 
$f(\bx)\in \F_q[x_1,\ldots,x_n,\frac{1}{x_1\cdots x_n}]$
over the $n$-dimensional toric  $\Gm^n$ (where $q=p^a$) is 
$$
S_f^*(k,T):=\sum_{\bx\in \Gm^n(\F_{q^k})} 
(1+T)^{\Tr_{\Q_{q^k}/\Q_p}(\hat{f}(\hat\bx))}
$$
in $\Z_p[[T]]$, where
$\hat{f}$ is the coefficient-wise Teichm\"uller lift of $f$.
The $T$-adic $L^*$-function of $f$ is 
\begin{eqnarray*}
L_f^*(T,s)=\exp(\sum_{k=0}^{\infty}S_f^*(k,T)\frac{s^k}{k})
=\prod_{\bx\in \Gm^n(\F_{q^k})}\frac{1}{1-(1+T)^{\Tr_{\Q_{q^k}/\Q_p}(\hat{f}(\hat{\bx}))}s^{\deg(\bx)}}
\end{eqnarray*}
in $1+s\Z_p[[T]][[s]]$.
The $T$-adic {\em characteristic power series} of $f$ over $\Gm^n$ is the 
generating function
\begin{equation}
\label{E:Cf}
C_f^*(T,s) =\exp(-\frac{1}{(q^k-1)^n}S_f^*(k,T)\frac{s^k}{k})
\end{equation}
in $1+s\Z_p[[T]][[s]]$.
We may verify the following straightforward relations of these above two power series
\begin{eqnarray*}
L_f^*(T,s)^{(-1)^{n-1}} 
&=& 
\prod_{i=0,\ldots,n} C_f^*(T,q^is)^{(-1)^i\binom{n}{i}}
=\frac{\prod_{i=0,2,\cdots} C_f^*(T,q^is)^{\binom{n}{i}}}
{\prod_{i=1,3,\cdots} C_f^*(T,q^is)^{\binom{n}{i}}}
\\
C_f^*(T,s) &=& \prod_{j=0}^{\infty} L_f^*(T,q^js)^{(-1)^{n-1}\binom{n-1+j}{j}}
\end{eqnarray*}
In particular, when $f$ has two variables
then 
\begin{equation}\label{E:relation1}
\left\{
\begin{array}{lll}
L_f^*(T,s)^{-1} &=&  \frac{C_f^*(T,s) C_f^*(T,q^2s)}{C_f^*(T,qs)^2}
\\
C_f^*(T,s) &=& L_f^*(T,s)^{-1} L_f^*(T,qs)^{-2} L_f^*(T,q^2s)^{-3}\cdots.
\end{array}
\right.
\end{equation}
For any $\chi:\Z_p\rightarrow \C_p^*$ with $\chi(1)=\zeta_{p^{m_\chi}}$ 
and for $\pi_\chi=\chi(1)-1$, 
the specialization of these functions at $T=\pi_\chi$ gives rise to $L_f^*(\chi,s)$ and $C_f^*(\chi,s)$, that is, 
\begin{eqnarray}
\label{E:relation3}
L_f^*(\chi,s)=L_f^*(T,s)|_{T=\pi_\chi}, \quad C_f^*(\chi,s)=C_f^*(T,s)|_{T=\pi_\chi}.
\end{eqnarray}

Let $\Delta=\Delta(f)$ be the Newton polytope of $f$, 
namely, 
the convex hull of all $\bv$ with $a_{\bv}\neq 0$ (in the base field)
(for $f=\sum_{\bv} a_\bv \bx^\bv$) and the origin $\vec{0}$ in 
$\R^n$.
Let $S(\Delta)$ be the set of all integral lattice points in the real cone of 
$\Delta$ in $\R^n$. 
For any $\bv\in S(\Delta)$ 
its {\em weight} $w(\bv)$ is defined as the smallest positive rational number $c$ such that 
$\bv\in c\Delta$ if $c$ exists, otherwise $w(\bv)=0$. 
Let $D$ be the denominator of $\Delta$.
(We remark that in our entire paper we fix 
$D=d_1d_2$, slightly different from other custom in the literature where $D$ is the least common multiple of $d_1,d_2$). For any integer $k\geq 0$ let 
\begin{equation}
\label{E:W&H}
\left\{
\begin{array}{lll}
W_\Delta(k)&:=&\#\{\bv\in S(\Delta)|w(\bv)=\frac{k}{D}\}\\
H_\Delta(k)&:=&\sum_{i=0}^{n}(-1)^i\binom{n}{i}W_\Delta(k-i\cdot D).
\end{array}
\right.
\end{equation}
Notice that $H_\Delta(k)\in \Z_{\geq 0}$, 
$H_\Delta(k)=0$ for all $k>n\cdot D$, 
and 
$
\sum_{k=0}^{nD}H_\Delta(k) = n! V(\Delta) 
$
where $V(\Delta)$ is the volume of the polytope $\Delta$ in $\R^n$. 
We also denote 
$\NP_T(-)$ as the $T$-adic Newton polygon with valuation ring obvious from context.
Let 
\begin{eqnarray}
\label{E:HPC}
\HP_C(\Delta)
&:=&\NP_T(\prod_{k=0}^{\infty}(1-
(T^{\frac{k}{D}}s)^{W_\Delta(k)} )).
\end{eqnarray}
It has vertices at 
$\coprod_{i\geq 0}(\sum_{k=0}^{i}W_\Delta(k), \sum_{k=0}^{i} \frac{k\cdot W_\Delta(k)}{D})
$ and 
its slopes with multiplicity are
$\coprod_{i\geq 0}(\frac{i}{D})^{W_\Delta(i)}$. 
Let 
\begin{eqnarray}
\label{E:HPL}
\HP_L(\Delta) &:=&\NP_T(\prod_{k=0}^{nD}(1-(T^{\frac{k}{D}}s)^{H_\Delta(k)} )).
\end{eqnarray}
Its vertices are at 
$\coprod_{i=0}^{n D}(\sum_{k=0}^{i}H_\Delta(k), \sum_{k=0}^{i} \frac{k\cdot H_\Delta(k)}{D})
$.

We denote $\geq$ for the partial order {\em lies above or equal to} 
in the set of lower convex Newton polygons.
Adolphson-Sperber \cite{AS89} and Wan \cite{Wan93}
have shown that for $p\equiv 1\bmod D^*(\Delta)$ 
for some constant $D^*(\Delta)$ (depending only on $\Delta$) 
the Newton polygon of $L_f^*(\chi,s)$ (for $\chi$ of conductor $p$) 
coincides with the Hodge polygon (see $\HP_L(\Delta)$ in (\ref{E:HPL})).
In this paper we shall take for granted of the following facts 
that are due to Liu-Wan (see \cite{LW09}):
\begin{enumerate}
\item  $L_f^*(T,s)^{(-1)^{n-1}}$ 
and $C_f^*(T,s)$ lie in $\Z_p[[T]][[s]]$.
\item 
$\NP_{T^{a(p-1)}}L_f^*(T,s)^{(-1)^{n-1}}\geq \HP_L(\Delta)$ 
in its first $n!V(\Delta)$ terms.
\item 
$\NP_{T^{a(p-1)}}C_f^*(T,s)\geq \HP_C(\Delta)$.
\item For any nontrivial $\chi: \Zp\rightarrow \C_p^*$ we have
$\NP_{\pi_\chi^{a(p-1)}}C_f^*(\chi,s)\geq \HP_C(\Delta)$.
\end{enumerate}

\subsection{Interpolating character series}

For the rest of the paper we 
fix two integers $d_1,d_2\geq 3$ that defines the Newton polytope $\Delta$ in $\R^2$:
namely $\Delta$ is the region $0\leq x\leq d_1$ and $0\leq y\leq d_2$. 
Let $S(\Delta)$ be the set of all integral points
in the cone of $\Delta$. For some subset $\cS$ of $S(\Delta)$, there is 
the {\em simplex partition} $\cS=\cS_1\coprod \cS_2$ 
according to $\frac{v_1}{d_1}\ge \frac{v_2}{d_2}$ or not, see 
Figure \ref{Fig:simplex}. For any $\bv=(v_1,v_2)\in S(\Delta)$, observe its weight
$w(\bv)=\max(\frac{v_1}{d_1},\frac{v_2}{d_2})$, in other words $w(\bv)=\frac{v_i}{d_i}$
if $\bv\in\cS_i$. 

\begin{figure}[h!]
\begin{tikzpicture}[scale=0.6]

\filldraw[fill=gray!30] (0,0) -- (4,0) -- (4,3) -- (0,3) -- (0,0);
\filldraw[fill=gray!50] (0,0) -- (4,0) -- (4,3);
\draw[->] (0,0) -- (7,0);
\draw[->](0,0) --(0,5);
\draw[dashed,thick] (0,0) -- (5,3.7);
\draw[thick] (5,0) -- (5,3.75);
\draw[thick] (0,3.75) -- (5,3.75);

\node[below left] at (0,0) {0};
\node[left] at (0,4) {$d_2$};
\node[below] at (5,0) {$d_1$} ;
\node[] at (1,2) {$\cS_2$};
\node[] at (2.5,1) {$\cS_1$};
\node[below] at (7,0) {$x_1$};
\node[left] at (0,5) {$x_2$};
\filldraw[fill=black] (0,0) circle (3pt);
\filldraw[fill=black] (4,0) circle (3pt);
\filldraw[fill=black] (0,3) circle (3pt);
\filldraw[fill=black] (4,3) circle (3pt);
\filldraw[fill=black] (5,0) circle (2pt);
\filldraw[fill=black] (0,3.75) circle (2pt);
\end{tikzpicture}

\caption{Simplex partition 
$\cS=\cS_1\coprod \cS_2$}
\label{Fig:simplex}
\end{figure}

\noindent Write $D=d_1d_2$, let 
\begin{eqnarray}
\label{E:I_D}
I_D:=\{n\in\Z| 0\leq n< D, d_1|n \mbox{ or } d_2|n\}.
\end{eqnarray}
For any $n\geq 0$, let
\begin{eqnarray}
\label{E:kn}
\cF_n:=\{\bv\in\Z_{\geq 0}^2| w(\bv)\leq \frac{n}{D}\},
\end{eqnarray}
hence $k_n:=\#\cF_n=\sum_{k=0}^{n}W_\Delta(k)$ (as defined in (\ref{E:W&H})).
There is a natural increasing filtration 
$\{\vec{0}\}=\cF_0\subseteq \cdots \subseteq \cF_{n-1}\subseteq \cF_n\subseteq \cdots$
and $\cF_{n-1}\neq \cF_n$ if and only if $n\in I_D$.
For our entire investigation in $2$-variable case we shall
fix a 2-dimensional {\em residue class} $\cR$ of $p\bmod (d_1,d_2)$, that is, 
a pair $(R_1,R_2)$ in $\Z/d_1\Z\oplus \Z/d_2\Z$ such that 
$(R_1,R_2)=(p\bmod d_1,p\bmod d_2)$ with $0\leq R_1\leq d_1-1, 0\leq R_2\leq d_2-1$. Of course each residue is in bijection with 
the residue $\bmod \lcm(d_1,d_2)$.
We denote $p\in \cR$.
For any $\cS\subset S(\Delta)$ let $\Sym(\cS)$ denote the set of all permutations of $\cS$.
For any $\bi\in \cS$ and $\sigma\in \Sym(\cS)$ let 
$\vec{r}_\sigma=(r_1,r_2):=(p\bi-\sigma(\bi))\bmod (d_1,d_2))$ such that 
$0\leq r_1\leq d_1-1, 0\leq r_2\leq d_2-1$. 
Write 
\begin{equation}\label{E:MW}
\left\{
\begin{array}{ll}
M(\cS,\sigma)&=|\cS| - (\frac{1}{d_1}\sum_{\bi\in\cS_1} r_1+\frac{1}{d_2}\sum_{\bi\in \cS_2}r_2)\\
W(\cS,\sigma)&=\sum_{\bi\in \cS}w(\bi)+\frac{1}{p-1}M(\cS,\sigma).
\end{array}
\right.
\end{equation}
Let
$M_{\cS}^0=\min_{\sigma\in \Sym(\cS)}M(\cS,\sigma)$.
Let $\cK_k$ be the set of all subsets 
$\cS\subset S(\Delta)$ with $k$ elements, and let 
$\cK_k^0$ be its subset 
with $\bi$'s with $w(\bi)\leq w(\bi')$ for any 
$\bi'$ not in $\cS$.
Then for $p$ large enough 
$\min_{\cS,\sigma}W(\cS,\sigma)$ is achieved at and only at  $\cS\in \cK_k^0$ for suitable $k$.
We denote by $\Sym^\ell(\cS)$ for all $\sigma$ 
such that $M(\cS,\sigma)=M_{\cS}^0+\frac{\ell}{D}$.
We write 
\begin{equation*}
\boxed{
W_k^\ell=
\sum_{\bi\in\cS}w(\bi) +\frac{M_\cS^\ell}{p-1};\qquad
M_\cS^\ell = M_\cS^0+\frac{\ell}{D}
}
\end{equation*}
for some $\cS\in \cK_k^0$.
For every $\sigma\in \Sym(\cS)$, define a rational number
\begin{eqnarray*}
Q_{\cS_2,\sigma}&:=&
\prod_{\bi\in\cS_2} \frac{1}{\pfloor{\frac{pi_1-\sigma(i_1)}{d_1}}!
(\pfloor{\frac{pi_2-\sigma(i_2)}{d_2}}-\pfloor{\frac{pi_1-\sigma(i_1)}{d_1}})!}. 
\end{eqnarray*}
Let $Q_{\cS_1,\sigma}$ be defined similarly by swapping subindices $1$ and $2$ in
the above formula. 
Let $a_{d_1,d_2}=a_{d_1,0}=a_{0,d_2}=a_{0,0}=1$ and let $n\in I_D$.
For any $\ell\geq 0$ define a polynomial function 
in $(\Zp\cap\Q)[a_\bv]$
\begin{eqnarray*}
G_{k_n,p}^\ell
&=&
\sum_{\sigma\in \Sym^\ell(\cF_n)}
(-1)^{k_n}\sgn(\sigma)Q_{(\cF_n)_1,\sigma} Q_{(\cF_n)_2,\sigma}\cdot \prod_{\bi\in\cF_n}a_{\vec{r}_\sigma}.
\end{eqnarray*}
Then we define a polynomial in $\Z[a_\bv]$:
\begin{eqnarray}
\label{E:GknR}
\boxed{
G_{k_n,\cR}^\ell=\sum_{\sigma\in \Sym^\ell(\cF_n)}
(-1)^{k_n}\sgn(\sigma)U_{k_n,p} Q_{(\cF_n)_1,\sigma} Q_{(\cF_n)_2,\sigma}\prod_{\bi\in\cF_n}
a_{\vec{r}_\sigma}}
\end{eqnarray}
where 
\begin{eqnarray*}
U_{k_n,p} :=\prod_{\bi\in (\cF_n)_2} \pfloor{\frac{pi_1}{d_1}}!\pfloor{p(\frac{i_2}{d_2}-\frac{i_1}{d_1})}!\prod_{\bi\in (\cF_n)_1} \pfloor{\frac{pi_2}{d_2}}!\pfloor{p(\frac{i_1}{d_1}-\frac{i_2}{d_2})}!,
\end{eqnarray*}
We shall use $O(-)$ to denote higher order terms $p$-adically 
or $\pi$-adically below, which should be clear from context.

\begin{lemma}
\label{L:open}
Fix $d_1,d_2\geq 3$, and fix a nontrivial
residue class $\cR$ in $\Z/d_1\oplus \Z/d_2$. Fix $n\in I_D$.
Then $G_{k_n,\cR}^\ell=U_{k_n,p} G_{k_n,p}^\ell+O(p^{>1})$.
There exists a least nonnegative integer $\ell<D^2$
such that  $G_{k_n,\cR}^\ell$ is a nonzero polynomial 
in $\Z[a_\bv]$.
\end{lemma}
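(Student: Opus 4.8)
The plan is to separate the two assertions of Lemma~\ref{L:open} and treat them in sequence, since the first is essentially a bookkeeping identity and the second is the substantive claim that the polynomial does not vanish identically. For the first assertion, $G_{k_n,\cR}^\ell = U_{k_n,p}G_{k_n,p}^\ell + O(p^{>1})$, I would start from the definitions \eqref{E:GknR} of $G_{k_n,\cR}^\ell$ and the displayed formula for $G_{k_n,p}^\ell$, noting that $G_{k_n,\cR}^\ell$ is obtained from $G_{k_n,p}^\ell$ by multiplying the summand for each $\sigma$ by the common factor $U_{k_n,p}$. So it suffices to show that $U_{k_n,p}\, Q_{(\cF_n)_1,\sigma}\, Q_{(\cF_n)_2,\sigma}$ is a $p$-adic integer that is congruent $\bmod\ p^{>1}$ to the corresponding rational appearing in $G_{k_n,p}^\ell$ times $U_{k_n,p}$; in fact $G_{k_n,p}^\ell \in (\Zp\cap\Q)[a_\bv]$ by construction, and $U_{k_n,p}$ is a product of factorials of integers of size $O(p)$, so the main point is just that clearing the denominators in the $Q$'s against $U_{k_n,p}$ produces a $p$-unit up to first order. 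Concretely, for $\bi\in(\cF_n)_2$ the denominator of $Q_{(\cF_n)_2,\sigma}$ carries $\pfloor{\frac{pi_1-\sigma(i_1)}{d_1}}!$ and $(\pfloor{\frac{pi_2-\sigma(i_2)}{d_2}}-\pfloor{\frac{pi_1-\sigma(i_1)}{d_1}})!$, while $U_{k_n,p}$ contributes $\pfloor{\frac{pi_1}{d_1}}!\,\pfloor{p(\frac{i_2}{d_2}-\frac{i_1}{d_1})}!$; since $\sigma(i_1),\sigma(i_2)$ are bounded independently of $p$, the ratio of each such factorial pair is $\equiv 1 \pmod p$ by a Stirling-type/Wilson-type estimate on $\frac{(m+c)!}{m!}$ for $c$ fixed and $m=O(p)$ (more precisely one tracks the $p$-adic valuation and checks it is unchanged, and the unit part is $1+O(p)$). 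Summing over $\sigma\in\Sym^\ell(\cF_n)$ gives the claimed congruence.

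For the second assertion I would argue that among all the finitely many $\ell$ with $0\le \ell < D^2$ (the bound $D^2$ coming from the fact that $M(\cS,\sigma)-M_\cS^0 = \frac{\ell}{D}$ with $\ell$ a nonnegative integer bounded by $D$ times the diameter of the relevant range of $M$-values, which is $O(D)$), at least one $G_{k_n,\cR}^\ell$ is a nonzero polynomial in $\Z[a_\bv]$. The natural strategy is: first show $G_{k_n,p}^\ell$ is nonzero for the minimal such $\ell$ by isolating a monomial $\prod_{\bi\in\cF_n}a_{\vec r_\sigma}$ whose coefficient cannot cancel, then transfer nonvanishing to $G_{k_n,\cR}^\ell$ via the first part of the lemma (since $U_{k_n,p}\ne 0$ and the $O(p^{>1})$ error cannot kill a coefficient that is a $p$-adic unit times a nonzero integer). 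To see $G_{k_n,p}^\ell\ne 0$, observe that $\Sym(\cF_n)$ is partitioned by the value of $M(\cF_n,\sigma)$; take $\ell$ minimal so that $\Sym^\ell(\cF_n)$ contributes a surviving term. The key subtlety is that different $\sigma$ may give the same monomial $\prod_\bi a_{\vec r_\sigma}$ (the monomial depends only on the multiset $\{\vec r_\sigma\}_\bi$, not on $\sigma$ itself), so one must rule out cancellation among $\sigma$'s with the same residue-multiset. Here I would use that for such $\sigma$ the factor $\sgn(\sigma) Q_{(\cF_n)_1,\sigma} Q_{(\cF_n)_2,\sigma}$ has a definite sign pattern — the $Q$'s are positive rationals (reciprocals of products of factorials), so the sign of each summand is exactly $(-1)^{k_n}\sgn(\sigma)$, and one needs the permutations realizing a fixed residue-multiset with $M=M_\cF^0+\ell/D$ to all have the same sign, or at least for the signed sum of the $Q$-weights to be nonzero.

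I would handle that by choosing the monomial carefully: take the "diagonal-type" permutation $\sigma_0$ that is as close to the identity as possible (e.g. $\sigma_0 = \mathrm{id}$ when $\ell=0$, giving $\vec r_{\sigma_0} = (p\bi - \bi) \bmod (d_1,d_2)$ for each $\bi$), which lies in $\cK_{k_n}^0$-type extremal position, and argue its residue-multiset is attained by $\sigma_0$ uniquely, or by a set of permutations on which the signed sum of $Q$-weights is manifestly nonzero because one permutation dominates $p$-adically or in absolute value. Alternatively, a cleaner route: work with the leading term in the filtration and use that $a_{d_1,d_2}=a_{d_1,0}=a_{0,d_2}=a_{0,0}=1$ forces certain variables to be constants, pinning down the monomial's coefficient to an explicit nonzero rational. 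The main obstacle, which I expect to absorb the bulk of the work, is precisely this \emph{no-cancellation} analysis — proving that the signed, factorial-weighted sum over all $\sigma\in\Sym^\ell(\cF_n)$ producing a given residue-multiset is nonzero — together with pinning down the correct minimal $\ell$ and verifying the bound $\ell<D^2$. Everything else (the $p$-adic unit estimates, the passage from $G^\ell_{k_n,p}$ to $G^\ell_{k_n,\cR}$, the finiteness of the search range for $\ell$) is comparatively routine once the structure of $M(\cS,\sigma)$ and the simplex partition $\cS=\cS_1\amalg\cS_2$ are used to organize the permutations.
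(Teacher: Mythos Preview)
Your treatment of the first assertion (the congruence $G_{k_n,\cR}^\ell=U_{k_n,p}G_{k_n,p}^\ell+O(p^{>1})$) is essentially the paper's: after cancelling the factorials in $U_{k_n,p}$ against those in $Q_{(\cF_n)_1,\sigma}Q_{(\cF_n)_2,\sigma}$, the residue $\bmod\ p$ depends only on $\cR$, not on $p$.

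For the nonvanishing, you correctly locate the obstacle --- ruling out cancellation among permutations yielding the same residue-multiset --- but none of your proposed strategies is the paper's, and each has a real problem. Taking $\sigma_0=\mathrm{id}$ does \emph{not} generally give a unique residue-multiset: different $\bi\in\cF_n$ can share the same $\overline{(p-1)\bi}$, and once there are repetitions other permutations can reproduce the multiset. The sign argument fails for the same reason --- two permutations realizing the same multiset need not have equal sign, and there is no mechanism forcing this. Domination ``$p$-adically or in absolute value'' does not help either, since after clearing denominators by $U_{k_n,p}$ all summands become $p$-adic units of comparable size; no single term dominates.

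What the paper actually does is sidestep cancellation entirely by an \emph{extremal monomial} argument. One puts a total order $\Xi$ on $\Z/d_1\oplus\Z/d_2$ (first by weight, then lexicographically), extends it to monomials $\prod a_{\br}^{t_{\br}}$ lexicographically in the exponent vector, and then runs a greedy selection on the residue matrix $[R]=(\overline{p\bi-\bj})_{\bi,\bj\in\cF_n}$: within the two diagonal blocks (where $\bi,\bj$ lie in the same simplex piece), repeatedly pick the $\Xi$-largest available entry, record $\sigma(\bi)=\bj$, and delete that row and column. The resulting $\sigma$ has, by construction, the \emph{unique} $\Xi$-maximal monomial $\tau(\sigma)$ among all permutations, so its coefficient cannot be cancelled by any other term. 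This is the missing idea. The bound $\ell<D^2$ then follows as you outlined: the greedy $\sigma$ differs entrywise from any minimizing $\sigma^0$ by residues bounded by $(d_1,d_2)$, forcing $M(\cF_n,\sigma)-M_{\cF_n}^0<D$ and hence $\ell<D^2$.
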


\begin{proof}
\begin{enumerate}
\item 
After some elementary cancellation, we notice by its very definition 
that $(U_{k_n,p} Q_{(\cF_n)_1,\sigma} Q_{(\cF_n)_2,\sigma}\bmod p)$ is independent of $p$.
Moreover the subindices $\vec{r}_\sigma$ are also independent of $p$.
This proves that $G_{k_n,\cR}^\ell$ depends only on $d_1,d_2,\Sym^0(\cF_n)$ and $\cR$.

\item 
Notice that for each permutation $\sigma\in\Sym(\cF_n)$,
there is a corresponding monomial $\prod_{\bi\in\cF_n}a_{\vec{r}_\sigma}$ in the sum of 
$G_{k_n,\cR}$ by 
$$\tau:
\sigma\mapsto \prod_{\bi\in\cF_n}
a_{\vec{r}_\sigma}$$ where $\vec{r}_\sigma=\bar{p\bi-\sigma(\bi)}$.
For nonzero-ness of the polynomial $G_{k_n,\cR}^\ell$
it suffices to find a monomial $\prod_{\bi\in\cF_n}a_{\vec{r}_\sigma}$ that is unique (namely, 
it can not be cancelled by other terms).
To this end, we shall explicitly locate $\sigma$ 
such that its corresponding monomial $\tau(\sigma)$ is unique. 

Some preparation is in order:
First of all, we define a total order $\Xi$ on the residue 
set $\Z/d_1\Z\oplus \Z/d_2\Z$ as follows: 
first of all we order them in terms of weight (as defined in Section \ref{SS:Dwork}), 
namely,  define $\bv\geq \bv'$ if $w(\bv)\geq w(\bv')$; for the subset of 
vectors with equal weight, we order them in lexicographic order
with the first coordinate of higher order. This defines a total order,
we write $\bv>\bv'$.
This order $\Xi$ defines a total order on the set of monomials 
$\prod_{\br\in\Z/d_1\oplus \Z/d_2}a_\br^{t_\br}$ (index $\br$ is ordered by $\Xi$) as follows:
$\prod_{\br} a_\br^{t_\br}> \prod_{\br}a_\br^{s_\br}$
when $(t_\br)_\br > (s_\br)_\br$ in lexicographic order. By abuse of notation we call this 
total order on $\Xi$ as well. We shall locate the permutation $\sigma\in \Sym(\cF_n)$ that corresponds to the monomial 
of the {\em unique} highest $\Xi$-order. 
Let $[R]$ be the $k_n\times k_n$ residue matrix $\bar{p\bi-\bj}$ with $\bi,\bj\in\cF_n$. 
Then each row (and column) consists of pairwise distinct vectors 
in $\Z/d_1\Z\oplus \Z/d_2\Z$.
Take all entries of highest $\Xi$-order in the two diagonal blocks 
of $[R]$ where $\bi,\bj\in (\cF_n)_1$ or $\bi,\bj\in (\cF_n)_2$.
By definition,  any two entries of the same $\Xi$-order necessarily lie in different rows 
and different columns. Collect every $(\bx,\by)$ in $[R]$,
assign $\sigma(\bx)=\by$,  then cross off the $\bx$-the row and $y$-th column. 
Repeat this step on the left-over matrix until entire matrix $[R]$ is crossed off. 
This obviously defines a permutation in $\Sym(\cF_n)$.
Then its corresponding monomial under $\tau$ is 
$$\tau(\sigma)
=
\prod_{\br\in\Z/d_1\oplus \Z/d_2} a_{\br}^{m_\br}
$$ 
where $m_\br=\#\{\bi\in\cF_n| \bar{p\bi-\sigma(\bi)} = \br\}$.
By construction it is of the highest $\Xi$-order in the two diagonal 
blocks, hence it is unique.

\item
Each permutation $\sigma$ constructed from part 2) above yields a specialization 
$$M(\cF_n,\sigma)=k_n-(\frac{1}{d_1}\sum_{(\cF_n)_1} r_1 + \frac{1}{d_2}\sum_{(\cF_n)_2} r_2)
$$
where the two sums ranges over all entries $(r_1,r_2)$ picked in 
the residue matrix $[R]$ from the above algorithm.
Suppose the minimum $M_{\cF_n}^0$ is achieved at 
$\sigma^0$, and its corresponding entries in $[R]$ are
$(r_1^0,r_2^0)$ at the same same row of $[R]$. 
Then $|r_1-r_1^0|<d_1$ and 
$|r_2-r_2^0|<d_2$, and hence 
$M(\cF_n,\sigma)-M_{\cF_n}^0 < D$.
This proves that $M(\cF_n,\sigma)<M_{\cF_n}^0+\frac{D^2}{D}$
and hence $\sigma\in \Sym^\ell(\cF_n)$ for some $\ell<D^2$ 
by definition (\ref{E:GknR}). We thus conclude that 
$G_{k_n,\cR}^\ell$ is nonzero.
\end{enumerate}
\end{proof}

\subsection{Generic Newton slope formula}

Let notation be as in Lemma \ref{L:open}.
For simplicity, from now on we shall denote 
by $G_{k_n,\cR}:=G_{k_n,\cR}^\ell$, 
$M_\cS=M_\cS^\ell$,
and $W_k=W_k^\ell$.
Consider $W_k$ as a function of
$k\in\Z_{\geq 0}$. Define 
a slope function $s(k,k'):=\frac{W_k-W_{k'}}{k-k'}$ for any $k\neq k'$ such that
$0\leq k,k'\leq D$.
This is just the slope of the line connecting 
$(k,W_k)$ and $(k',W_{k'})$,
the defining points for $\gNP$.
Suppose $k_n\leq k< k_{n+1}$ for some $n\in I_D$, since $\cS\in\cK_k^0$
we have $\cS=\cF_n\coprod (\cS\cap \cF_{n+1})$.
So 
$$
W_k-W_{k_n}
=\sum_{\bj\in\cS\cap \cF_{n+1}}w(\bj)+\frac{M_\cS-M_{\cF_n}}{p-1}
=\frac{n+1}{D}(k-k_n)+\frac{M_\cS-M_{\cF_n}}{p-1}.
$$
If $k\neq k_n$ then 
\begin{eqnarray}
\label{E:s}
s(k,k_n)
=\frac{n+1}{D}+\frac{1}{p-1}\frac{M_\cS-M_{\cF_n}}{k-k_n}.
\end{eqnarray}

\begin{definition}
\label{D:sNP}
	Let $d_1,d_2\ge 3$, and $\cR$ be given.
	For any $n\in I_D$, let $\varepsilon_n:=M_{\cF_n}-M_{\cF_m}$ where
	$n,m\in I_D$ and $k_m=k_{n-1}$; 
	and write $\beta_n(t):=\frac{n}{D}+\frac{\varepsilon_n}{t-1}$.
Define a piece-wise linear function that is parameterized by $t$:
\begin{eqnarray*}
\sNP(\Delta,t)&:=&\NP_T(\prod_{n\in I_D}(1-(T^{\beta_n(t)}s)^{W_\Delta(n)})).
\end{eqnarray*}
Consider $\sNP(\Delta,t)$ a function in $t$ with domain consisting of all prime number $p$ in the residue class $\cR$ only. 
Then $\sNP(\Delta,p)$ is the lower convex hull of the points 
$(k_n,W_{k_n})$. 
Its slopes are $\coprod_{n\in I_D}\beta_n(p)^{W_\Delta(n)}$ and $0\leq \beta_n(p)
\leq 1$.
Let $\gNP(\Delta,p)$ be the lower convex hull of the points $(k,W_k)$
for all $0\leq k\leq D$ in $\R^2$.
\end{definition}

\begin{definition}[Formula for generic Newton polygon: classical character]
\label{D:GNP}
Fix integers $d_1,d_2\geq 3$ that defines $\Delta$.
Fix a nontrivial residue class $\cR$ in $\Z/d_1\oplus \Z/d_2$.
For any $t$ ranging over all primes $p$ in $\cR$ define a $t$-parameterized 
piece-wise linear lower convex function on $[0,2d_1d_2]$
$$
\GNP_L(\Delta,t)=
\NP_T\prod_{n\in I_D}\left(1-
(T^{\beta_n(t)}s)^{W_\Delta(n)} 
\right)\left(1-(T^{2-\beta_n(t)}s)^{W_\Delta(n)} 
\right)
$$
where $\varepsilon_n=M_{\cF_n}-M_{\cF_{n-1}}$ and  
$\beta_n(t)=\frac{n}{D}+\frac{\varepsilon_n}{t-1}$.
Let $\GNP_C(\Delta,t)$ be the product of $\GNP_L(\Delta,t)$
and the algebraic polygon of $\frac{1}{(1-s)^2}$.
\end{definition}

\begin{remark}\label{Remark:vertices}
(1) 
Slopes of 
$\GNP_C(\Delta,p)$ are 
$$
\boxed{
\GNP_C(\Delta,p)=\coprod_{i=1}^{\infty} 
\coprod_{n\in I_D} \{i-1+\beta_n(p), i+1-\beta_n(p)\}^{i\; W_\Delta(n)}
}
$$
where $0\leq \beta_n(p)\leq 1$. 
Since $\cR$ is nontrivial, for $p$ large enough, we have
$i-1+\beta_n(p)\neq  i+1-\beta_m(p)$ where $n\neq m\in I_D$ and $i\geq 0$
unless and only unless $\varepsilon_n=\varepsilon_{2D-n}=0$.

(2) 
These Newton polygons are defined in such a way that they depend on 
$\Delta$ and the residue class $\cR$. Moreover,
\begin{eqnarray*}
\lim_{t\rightarrow\infty}\GNP_L(\Delta,t)=\HP_L(\Delta),\quad \lim_{t\rightarrow\infty}\GNP_C(\Delta,t)=\HP_C(\Delta).
\end{eqnarray*}
\end{remark}

\begin{proposition}
\label{P:2}
\begin{enumerate}
\item
Let $\HP_C(\Delta)$ be as in (\ref{E:HPC}), 
we write $\NP^{[D]}$ for the truncation of first $D$ terms of a Newton polygon $\NP$. 
Then $$\sNP(\Delta,p)\geq \gNP(\Delta,p)\geq \HP_C(\Delta)^{[D]}.$$ 
\item 
The two Newton polygons 
$\gNP(\Delta,p)$ and $\sNP(\Delta,p)$ both have horizontal length equal to $D$.
For $p$ large enough (depending only on $d_1,d_2$),
$\gNP(\Delta,p)=\sNP(\Delta,p)$ with vertices exactly at $(k_n,W_{k_n})$. Its
slopes with multiplicities are $\coprod_{n\in I_D}\beta_n^{W_\Delta(n)}$.
Moreover, all slopes are in $[0,1]$.

\item 
For $p$ large enough (depending only on $d_1,d_2$), 
the two Newton polygons $\sNP(\Delta,p)$ and $\HP_C(\Delta)^{[D]}$ 
have vertices both at $x=k_n$ ($n\in I_D$) where the $y$-coordinate gap 
is equal to $\frac{\varepsilon_n}{p-1}$, where 
$\varepsilon_n\in \frac{\Z_{\geq 0}}{\lcm(d_1,d_2)}$.
Furthermore, 
$$\lim_{p\rightarrow \infty} \gNP(\Delta,p) = \lim_{p\rightarrow \infty}\sNP(\Delta,p)=\HP_C(\Delta)^{[D]}.$$
\end{enumerate}
\end{proposition}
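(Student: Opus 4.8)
\textbf{Overview.} The three parts are really one computation packaged three ways: we must understand the lower convex hull of the points $(k,W_k)$ for $0\le k\le D$, show it agrees with the hull of the sparser set of points $(k_n,W_{k_n})$ indexed by $n\in I_D$ once $p$ is large, and compare both to the truncated Hodge polygon $\HP_C(\Delta)^{[D]}$. The plan is to exploit the explicit slope formula (\ref{E:s}), namely $s(k,k_n)=\frac{n+1}{D}+\frac{1}{p-1}\cdot\frac{M_\cS-M_{\cF_n}}{k-k_n}$, together with the fact that the leading term $\frac{n}{D}$ of $\beta_n(p)$ comes from weights in $S(\Delta)$ while the correction $\frac{\varepsilon_n}{p-1}$ is a bounded rational with denominator dividing $\lcm(d_1,d_2)$, hence becomes negligible as $p\to\infty$.

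\textbf{Part (1).} The inequality $\sNP(\Delta,p)\ge \gNP(\Delta,p)$ is immediate: $\gNP(\Delta,p)$ is the lower convex hull of a superset of the vertices defining $\sNP(\Delta,p)$, so it lies on or below. For $\gNP(\Delta,p)\ge \HP_C(\Delta)^{[D]}$ I would argue vertex by vertex at the abscissae $k_n=\sum_{k=0}^n W_\Delta(k)$: the $y$-coordinate of $\gNP(\Delta,p)$ at $k_n$ is $W_{k_n}=\sum_{\bi\in\cF_n}w(\bi)+\frac{M_{\cF_n}^0}{p-1}$, while $\HP_C(\Delta)^{[D]}$ at $k_n$ has $y$-coordinate $\sum_{k=0}^n \frac{k W_\Delta(k)}{D}=\sum_{\bi\in\cF_n}w(\bi)$; since $M_{\cF_n}^0\ge 0$ (it is a nonnegative rational by construction of $M(\cS,\sigma)$, using that each $r_i/d_i\le 1$ so each summand in the parenthetical is at most $|\cS|$) the first dominates. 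Convexity of $\HP_C$ then upgrades the vertex-wise inequality to the polygon inequality, noting $\gNP(\Delta,p)$ passes through $(0,0)$.

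\textbf{Parts (2) and (3).} The horizontal length claim is bookkeeping: $\sum_{n\in I_D}W_\Delta(n)=\sum_{k=0}^{D}W_\Delta(k)=k_D=D$ since $k_D=\#\cF_D=\#\{\bv\ge 0:w(\bv)\le 1\}=\#\Delta(\Z^2)$… more precisely one checks $\sum_{k=0}^{D}W_\Delta(k)=D=d_1d_2$ directly from the shape of $\Delta$. For the equality $\gNP=\sNP$ with vertices exactly at $(k_n,W_{k_n})$: by (\ref{E:s}), the slope of the segment of $\gNP$ from $(k_n, W_{k_n})$ to any interior point $(k,W_k)$ with $k_n<k<k_{n+1}$ is $\frac{n+1}{D}+O\!\big(\tfrac{1}{p-1}\big)$, and the next genuine vertex slope $\beta_{n+1}(p)=\frac{n+1}{D}+\frac{\varepsilon_{n+1}}{p-1}$ differs from $\frac{n+1}{D}$ by the same order; since $\frac{n+1}{D}$ for distinct $n+1$ differ by at least $\frac{1}{D}$ and the $O(\tfrac{1}{p-1})$ terms have denominators bounded by $\lcm(d_1,d_2)$, for $p$ large enough (depending only on $d_1,d_2$) the sequence of slopes $\beta_n(p)$ is strictly increasing in $n\in I_D$ and no interior point $(k,W_k)$ can fall below the polygon through the $(k_n,W_{k_n})$ — this is exactly the statement that $\cK_k^0$ achieves the minimum of $W_k$, recorded before Definition \ref{D:sNP}. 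All slopes lie in $[0,1]$ because $0\le\beta_n(p)\le 1$, which in turn follows since $\frac{n}{D}\le 1$ for $n\le D$ and the correction $\frac{\varepsilon_n}{p-1}$ is small and signed so as to keep $\beta_n\le 1$ (the boundary cases $n=0,D$ having $\varepsilon_n=0$). Finally, part (3): at $x=k_n$ the gap between $\sNP(\Delta,p)$ and $\HP_C(\Delta)^{[D]}$ is $\frac{M_{\cF_n}^0}{p-1}=\frac{\varepsilon_0+\cdots+\varepsilon_n}{p-1}$, and $\varepsilon_n=M_{\cF_n}-M_{\cF_{n-1}}\in\frac{1}{\lcm(d_1,d_2)}\Z_{\ge 0}$ because $M(\cS,\sigma)$ has the form $|\cS|-\frac1{d_1}(\cdots)-\frac1{d_2}(\cdots)$ with integer numerators; letting $p\to\infty$ kills the gap, giving the stated limits (the $\gNP$ limit following from the squeeze $\gNP\le\sNP$ and $\gNP\ge\HP_C^{[D]}$ of part (1)).

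\textbf{Main obstacle.} The crux is the claim, used in part (2), that for $p$ large the minimum of $W_k$ over $\cK_k$ is attained on $\cK_k^0$ and that consequently the slopes $\beta_n(p)$ come out strictly monotone — i.e. that the $\frac{1}{p-1}$-corrections genuinely cannot reorder the dominant $\frac{n}{D}$-terms nor create spurious vertices between consecutive $k_n$. This requires a careful uniform (in $p$) bound: one must show $M(\cF_n\cup\{\bj\},\sigma)-M_{\cF_n}$ stays within a window of width $<D$ independent of $p$ (the argument sketched in part 3 of the proof of Lemma \ref{L:open}, bounding $|r_1-r_1^0|<d_1$ and $|r_2-r_2^0|<d_2$) and combine it with the gap $\ge\frac1D$ between distinct $\frac{n}{D}$. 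Everything else is convexity bookkeeping and tracking denominators.
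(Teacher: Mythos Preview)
Your approach is essentially the paper's: both proofs hinge on the slope formula (\ref{E:s}) and the observation that the leading terms $n/D$ jump by at least $1/D$ while the corrections are $O(1/(p-1))$ with numerator bounded independently of $p$. The paper makes this explicit: it bounds $|M_\cS - M_{\cF_n}|$ (using $M_\cS < D$), derives
\[
s(k_n,k') - s(k,k_n) \;\ge\; \frac{1}{D} - \frac{D-1}{p-1}
\]
for \emph{all} $k < k_n < k'$, and concludes that each $(k_n, W_{k_n})$ is a vertex of $\gNP$ once $p > D^2 - D + 1$; it then bounds $|W'_k - W_k| \le \frac{2D}{p-1}$ for the intermediate $k$. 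Your sketch captures exactly this mechanism and correctly isolates the crux in your ``Main obstacle'' paragraph, but leaves these explicit inequalities to be filled in.

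One small slip to fix in Part~(1): comparing $W_{k_n}$ to the Hodge ordinate only at the abscissae $k_n$ does not by itself yield $\gNP \ge \HP_C^{[D]}$, since $\gNP$ could in principle have extra vertices strictly between consecutive $k_n$'s and dip below $\HP_C$ there (convexity of $\HP_C$ does not prevent this). You need the inequality at every $k$, which is immediate by the same argument: for $\cS \in \cK_k^0$ the sum $\sum_{\bi\in\cS} w(\bi)$ equals the $\HP_C$-ordinate at $k$, and $M_\cS \ge 0$. In Part~(3) you record the $y$-gap at $x=k_n$ as the cumulative $M_{\cF_n}/(p-1)$, which is indeed what the computation gives; the statement's ``$\varepsilon_n/(p-1)$'' appears to be shorthand (or a slip), and in any case does not affect the denominator claim or the limit.
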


\begin{proof}
1) That $\sNP(\Delta,p)\geq \gNP(\Delta,p)\geq \HP_C(\Delta)$ for all $p$ is immediate by their definitions.

\noindent 2) We shall show  that $(k_n,W_{k_n})$ is a vertex point for $\gNP(\Delta,p)$ for each 
$n\in I_D$.
Pick two positive integers $k,k'$ such that $k<k_n<k'$. Notices that
$k'-k_n\geq 1$ and $1\leq k_n-k\leq D$.
So 
\begin{eqnarray}\label{E:k}
\frac{1}{k'-k_n}-\frac{1}{k_n-k}
& \le &
1-\frac{1}{k_n}\le 1-\frac{1}{D}.
\end{eqnarray}
By definition we have $M_\cS< D$, and hence 
\begin{eqnarray}
\label{E:MS}
|M_\cS-M_{\cF_n}|\le \max(M_\cS,M_{\cF_n})
\leq 2D.
\end{eqnarray}
For any $k<k_n$, by the argument above and (\ref{E:s}), we have
$$
s(k,k_n) \leq \frac{n}{D}+\frac{1}{p-1}\frac{D}{k_n-k}.
$$
Since $k_n<k'$ we have $k_m\leq k'<k_{m+1}$ for some $m\geq n+1$ and 
$m\in I_D$, 
we have similarly
$$
s(k_n,k')\geq \frac{n+1}{D}+\frac{1}{p-1}\frac{-D}{k'-k_n}.
$$
Therefore, 
\begin{eqnarray*}
s(k_n,k')-s(k,k_n) 
&\geq &
\frac{1}{D}-\frac{D}{p-1}(\frac{1}{k'-k_n}-\frac{1}{k_n-k})
\\
&\geq &
\frac{1}{D}-\frac{D}{p-1}(1-\frac{1}{D})\\
&\geq &
\frac{1}{D}-\frac{D-1}{p-1}.
\end{eqnarray*}
When $p>D^2-D+1$ we have
that 
$s(k_n,k')-s(k,k_n) >0$.
By the definition of slope function $s(-,-)$,
this means that $(k_n,W_{k_n})$ is a vertex on $\gNP(\Delta,p)$.

For any $k>0$, write $(k,W'_k)$ for points on $\sNP(\Delta,p)$.
We claim that for $p$ large enough,
$$|W'_k-W_k|\leq \frac{2D}{p-1}.
$$
By (\ref{E:s}) and (\ref{E:MS}), if $k\neq k_n$ 
\begin{eqnarray}
\label{E:slope}
|s(k,k_n) - \frac{n+1}{D}|
= 
\frac{1}{p-1}|\frac{M_\cS-M_{\cF_n}}{k-k_n}|
\leq \frac{1}{p-1}\frac{D}{k-k_n}.
\end{eqnarray}
Let $\cS\in\cK_k^0$ and $k_n\leq k<k_{n+1}$.
By definition  and Proposition \ref{P:2} (1),
$W'_k=W_k+\frac{k-k_n}{k_{n+1}-k_n}(W_{k_{n+1}} - W_{k_n})$. 
It follows
\begin{eqnarray*}
W'_k-W_k
&=&
(W'_k-W_{k_n})-(W_k-W_{k_n})\\
&=&
(\frac{W_{k_{n+1}}-W_{k_n}}{k_{n+1}-k_n})(k-k_n)
-(\frac{n+1}{D}(k-k_n)+\frac{M_\cS-M_{\cF_n}}{p-1})\\
&=&
(s(k_{n+1},k_n)-\frac{n+1}{D})(k-k_n)
-\frac{M_\cS-M_{\cF_n}}{p-1}.
\end{eqnarray*}
We have
$$
|(W'_k-W_k) - (s(k_{n+1},k_n)-\frac{n+1}{D})(k-k_n)|
\leq 
\frac{1}{p-1}|M_\cS-M_{\cF_n}|\leq \frac{D}{p-1}.
$$
Combining  the above and (\ref{E:slope}), the difference 
in $y$-coordinates of $\gNP(\Delta,p)$ and $\sNP(\Delta,p)$ at each $k$ is 
$
|W'_k-W_k|\leq \frac{2D}{p-1}
$
which approaches $0$ as $p\rightarrow \infty$.
This proves that $\sNP(\Delta,p)=\gNP(\Delta,p)$ for $p$ large enough.

\noindent 3) The vertices of $\HP_C(\Delta)$ are at
$(k_n, \sum_{\bi\in \cF_n}w(\bi))$, 
so the gap between $\HP_C(\Delta)$ and 
$\gNP(\Delta)$ at $x=k_n$ is $\varepsilon_n/(p-1)$.
But $\varepsilon_n$ only depends on the residue class $\cR$.
Hence it is bounded in terms of $d_1,d_2$. It follows from the definition 
of $\varepsilon_n$ that its denominator is the least common multiple of $d_1,d_2$.
\end{proof}

\section{Existence of global generic polynomials}
 
\subsection{Interpolating character series in 
$p$- and $T$-adically}

For the $p$-adic Artin-Hasse exponential $E(x)$, write it in $x$-adic expansion $E(x)=\sum_{j=0}^\infty u_jx^j$ where $u_j\in\Zp\cap\Q$. 
In particular, for $0\leq j\leq p-1$, $u_j=\frac{1}{j!}$.
Consider $E(x)$ as a function 
$E: \Z_p[[T]]\longrightarrow \Z_p[[T]]$. Let $\pi\in \Z_p[[T]]$ such that
$E(\pi)=1+T$. Then $T=E(\pi)-1=\pi+O(\pi^{>1})$.

We define two polynomials in $(\Z_p\cap\Q)[a_\bv][\pi]$ below:
For any $\bv\in S(\Delta)$ let
\begin{equation}
\label{E:b}
B_\bv:=\sum_{j_{\bw}} (\prod_{\bw\in\Delta\cap\Z^2}
u_{j_\bw}a_\bw^{j_\bw})\pi^{\sum_{\bw}j_\bw}
\end{equation}
where the sum ranges over the set of all $(j_\bw)_{\bw}$ with 
$\sum_{j_\bw} j_{\bw}\bw = \bv$ and $j_\bw\in\Z_{\geq 0}$.
%Any polynomial $f(a_\bw)$ with $(\bw\in\Delta)$ 
%is  $\Delta$-homogenous of degree $\bv$ if 
%$f(\lambda^{\bw}a_\bw)=\lambda^{\bv}f(a_\bw)$. 
For any $k\geq 1$ define 
\begin{eqnarray}
\label{E:C_k}
H_k:=(-1)^k\sum_{\cS\in \cK_k}\sum_{\sigma\in\Sym(\cS)} \sgn(\sigma)\prod_{\bi\in \cS} B_{p\bi-\sigma(\bi)}
\end{eqnarray}
where $\cS$ ranges over all sets in $S(\Delta)$ of cardinality $k$.

\begin{lemma} 
\label{L:b}
Let $n\in I_D$ and $k_{n-1}<k\leq k_n$. Let $\bi\in \cS\in\cK_k^0$.

(1) Let $\bi\in\cS_1$.
Then for $p$ large enough we have
$$
B_{p\bi-\bj}
=\pi^{t} u_{t_1}u_{t_2} 
a_{\bar{p\bi-\bj}}+O(\pi^{>})
$$
where 
$t=\pfloor{\frac{pi_1-j_1}{d_1}}+1$, 
$t_1=\pfloor{\frac{pi_2-j_2}{d_2}}$,
and 
$t_2=\pfloor{\frac{pi_1-j_1}{d_1}}-\pfloor{\frac{pi_2-j_2}{d_2}}$.

(2) For $\bi\in\cS_2$ same statement holds with subindices swapping.

(3) For $p$ large enough we have
\begin{eqnarray*}
\ord_\pi B_{p\bi-\bj} 
=
\left\{
\begin{array}{ll}
\pfloor{\frac{pi_1-j_1}{d_1}}+1 &\mbox{if $\bi\in\cS_1$}\\
\pfloor{\frac{pi_2-j_2}{d_2}}+1 & \mbox{if $\bi\in\cS_2$}.
\end{array}
\right.
\end{eqnarray*}
\end{lemma}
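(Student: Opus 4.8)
\textbf{Proof proposal for Lemma \ref{L:b}.}

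The plan is to analyze the $\pi$-adic expansion of $B_\bv$ directly from its definition \eqref{E:b}, $B_\bv = \sum_{(j_\bw)} \bigl(\prod_{\bw} u_{j_\bw} a_\bw^{j_\bw}\bigr)\pi^{\sum_\bw j_\bw}$, where the sum is over all ways of writing $\bv = \sum_\bw j_\bw \bw$ with $j_\bw \geq 0$. The $\pi$-order of each term is $\sum_\bw j_\bw$, so the leading term corresponds to a decomposition of $\bv$ into lattice points of $\Delta$ that minimizes the total number $\sum_\bw j_\bw$ of summands. The key combinatorial observation is that since $\Delta$ has vertices $(0,0),(d_1,0),(0,d_2),(d_1,d_2)$, only the vectors $\bw \in \{(0,0),(d_1,0),(0,d_2),(d_1,d_2)\}$ have weight $\leq 1$ as full corner contributions, but more relevantly, writing $\bv = (v_1,v_2)$, the cheapest way to ``pay off'' the large coordinates of $\bv = p\bi - \bj$ is to use copies of the corners $(d_1,0)$, $(0,d_2)$, $(d_1,d_2)$ (all with $u$-coefficient $1$ by the hypothesis $a_{d_1,d_2}=a_{d_1,0}=a_{0,d_2}=a_{0,0}=1$, and $u_j = 1/j!$ in the relevant range since $p$ is large) together with exactly one copy of the ``genuine'' low-weight vector $\bar{p\bi - \bj} = (r_1,r_2)$ with $0 \leq r_1 < d_1$, $0 \leq r_2 < d_2$.

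First I would set up the minimization precisely. For $\bi \in \cS_1$ (so $w(\bi) = i_1/d_1 \geq i_2/d_2$), write $p i_1 - j_1 = d_1 A_1 + r_1$ and $p i_2 - j_2 = d_2 A_2 + r_2$ with $A_1 = \pfloor{\frac{pi_1-j_1}{d_1}}$, $A_2 = \pfloor{\frac{pi_2-j_2}{d_2}}$, and $0 \leq r_1 < d_1$, $0 \leq r_2 < d_2$; note $A_1 \geq A_2 \geq 0$ for $p$ large because of the simplex condition. Then $\bv = p\bi - \bj = A_2 (d_1,d_2) + (A_1 - A_2)(d_1,0) + (r_1,r_2)$, which uses $A_2 + (A_1 - A_2) + 1 = A_1 + 1$ summands when $(r_1,r_2) \neq \vec 0$ (or $A_1$ summands if $(r_1,r_2) = \vec 0$, but then $a_{\vec 0}=a_{0,0}=1$ and one should track the edge case separately). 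I would then argue this decomposition is optimal: any decomposition must account for the first coordinate $v_1 \equiv r_1 \pmod{d_1}$ and second coordinate $v_2 \equiv r_2 \pmod{d_2}$, and since every available generator has nonnegative coordinates and the only generators with first coordinate $d_1$ are $(d_1,0)$ and $(d_1,d_2)$ while generators of small first coordinate contribute little, a counting/linear-programming argument (comparing ``cost'' $\sum j_\bw$ against ``progress'' toward $v_1$) pins the minimum at $A_1 + 1$. The coefficient of $\pi^{A_1+1}$ is then $u_{A_2} u_{A_1 - A_2} \cdot a_{r_1,r_2} = \frac{1}{A_2!}\cdot\frac{1}{(A_1-A_2)!}\cdot a_{\bar{p\bi-\bj}}$, which is exactly $u_{t_1} u_{t_2} a_{\bar{p\bi-\bj}}$ with $t_1 = A_2 = \pfloor{\frac{pi_2-j_2}{d_2}}$ and $t_2 = A_1 - A_2 = \pfloor{\frac{pi_1-j_1}{d_1}} - \pfloor{\frac{pi_2-j_2}{d_2}}$, and $t = A_1 + 1$. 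This gives (1); part (2) follows by the symmetry $d_1 \leftrightarrow d_2$, $\cS_1 \leftrightarrow \cS_2$; and part (3) is just reading off the $\pi$-order $t = A_1 + 1$ (resp.\ $A_2 + 1$), which is automatic once (1) and (2) are established, after checking that the leading coefficient is a unit (true since each $u_j$ with $j < p$ is a $p$-adic unit and $a_{\bar{p\bi-\bj}}$ is a free variable, generically nonzero).

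The main obstacle I expect is the optimality argument — proving rigorously that no cheaper decomposition of $\bv = p\bi - \bj$ exists, and in particular handling the interaction with the condition $\cS \in \cK_k^0$ and $\bi \in \cS_1$ (which is what guarantees $A_1 \geq A_2$ and keeps the leading term from collapsing). One has to be careful that ``$p$ large enough'' is used correctly: it ensures $p i_1 - j_1$ and $p i_2 - j_2$ are large and positive for all the relevant $\bi,\bj$ (since $\bj = \sigma(\bi)$ ranges over a fixed finite set $\cF_n$), so that $A_1, A_2 \geq 0$ and the floor functions behave as claimed, and also ensures the Artin–Hasse coefficients $u_j$ for the $j$'s appearing equal $1/j!$ rather than their higher-$p$ corrections. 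I would also double-check the degenerate case $(r_1,r_2) = \vec 0$ separately, since then $\bar{p\bi-\bj} = (0,0)$ and $a_{0,0} = 1$, and verify the stated formula still reads correctly (the count drops by one but so does the exponent bookkeeping, and this case is consistent with the general formula). Once optimality is in hand, the rest is bookkeeping with floor functions and the explicit form $u_j = 1/j!$.
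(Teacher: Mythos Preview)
Your proposal is correct and follows essentially the same route as the paper: the paper writes down the very same decomposition $\bv=\pfloor{v_2/d_2}(d_1,d_2)+(\pfloor{v_1/d_1}-\pfloor{v_2/d_2})(d_1,0)+\bar\bv$, asserts (without the linear-programming justification you sketch) that it realizes the unique minimum $\sum_\bw j_\bw=\pfloor{v_1/d_1}+1$, and then reads off (1)--(3) using $a_{d_1,d_2}=a_{d_1,0}=a_{0,d_2}=1$. Your added care about optimality, the range $t_1,t_2<p$ ensuring $u_j=1/j!$, and the degenerate residue case are all appropriate elaborations of steps the paper leaves implicit.
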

\begin{proof}
Restrict $\bi,\bj$ in $\Delta$ in the proof.
For $p$ large enough $p\bi-\bj$ lies in wherever $\bi$ is in the simplex partition of 
$S(\Delta)$, without loss of generality we may assume $\bi\in \cS_1$.
Then among all $j_\bw$ such that $\bv =\sum_{j_\bw} j_\bw \bw$,
the following sum
$$
\bv = \pfloor{\frac{v_2}{d_2}}(d_1,d_2) 
+ (\pfloor{\frac{v_1}{d_1}}-\pfloor{\frac{v_2}{d_2}})(d_1,0)+1\cdot (\bar\bv)
$$
gives the unique minimal $\sum_{j_\bw} j_\bw$ that is equal to 
$\pfloor{\frac{v_1}{d_1}}+1$.
Using our hypothesis that $a_{d_1,d_2}=a_{d_1,0}=a_{0,d_2}=1$
we conclude Part (1) and its $\pi$-adic valuation in Part (3).
The other case $\bi\in \cS_2$ in Part (2) is similar which we omit. 
\end{proof}

For ease of notation we suppress $\ell$ in 
all $W_{k_n}$ and $G_{k_n,\cR}$ from Lemma \ref{L:open} below
whenever context allows.

\begin{proposition}\label{P:Hkn}
Let notation be as in Lemma \ref{L:open}.
For any $n\in I_D$, 
\begin{eqnarray*}
H_{k_n}&=& \pi^{(p-1)W_{k_n}}U_{k_n,p}^{-1}
G_{k_n,\cR}+  O(\pi^{p-1})^{>W_{k_n}}+\pi^{(p-1)W_{k_n}}O(p^{>1}).
\end{eqnarray*}
\end{proposition}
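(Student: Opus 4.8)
The strategy is to expand $H_{k_n}$ using the definition \eqref{E:C_k}, substitute the asymptotic expansions of $B_{p\bi-\bj}$ from Lemma \ref{L:b}, and isolate the terms that achieve the minimal $\pi$-adic order. Recall that $H_{k_n}=(-1)^{k_n}\sum_{\cS\in\cK_{k_n}}\sum_{\sigma\in\Sym(\cS)}\sgn(\sigma)\prod_{\bi\in\cS}B_{p\bi-\sigma(\bi)}$. The first step is to show that, for $p$ large, the inner sum is dominated by sets $\cS\in\cK_{k_n}^0$: by Lemma \ref{L:b}(3) together with the definition \eqref{E:MW} of $W(\cS,\sigma)$, the $\pi$-order of $\prod_{\bi\in\cS}B_{p\bi-\sigma(\bi)}$ equals $(p-1)\bigl(\sum_{\bi\in\cS}w(\bi)\bigr)+$ (a correction from the floor functions), and the discussion following \eqref{E:MW} — that $\min_{\cS,\sigma}W(\cS,\sigma)$ is attained only on $\cK_{k_n}^0$ for $p$ large — shows every other $\cS$ contributes strictly higher $\pi$-order, i.e. lands in $O(\pi^{p-1})^{>W_{k_n}}$.

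Next I would fix $\cS=\cF_n\in\cK_{k_n}^0$ and analyze $\sum_{\sigma\in\Sym(\cF_n)}\sgn(\sigma)\prod_{\bi\in\cF_n}B_{p\bi-\sigma(\bi)}$. Using Lemma \ref{L:b}(1)--(2), for each $\sigma$ the leading term of $\prod_{\bi\in\cF_n}B_{p\bi-\sigma(\bi)}$ is
\[
\pi^{\sum_{\bi}(\lfloor\cdots\rfloor+1)}\Bigl(\prod_{\bi\in(\cF_n)_1}u_{t_1(\bi)}u_{t_2(\bi)}\Bigr)\Bigl(\prod_{\bi\in(\cF_n)_2}u_{s_1(\bi)}u_{s_2(\bi)}\Bigr)\prod_{\bi\in\cF_n}a_{\vec{r}_\sigma}+O(\pi^{>}).
\]
The exponent $\sum_{\bi\in\cF_n}(\lfloor\cdots\rfloor+1)$ must be rewritten: I would verify, by the identity $\lfloor\frac{pi_1-r_1}{d_1}\rfloor=\frac{pi_1-r_1}{d_1}$ when $r_1\equiv pi_1\pmod{d_1}$, that this exponent equals $(p-1)W(\cF_n,\sigma)$ up to the $\frac{1}{p-1}$-weighting convention, so that the minimal-order $\sigma$ are exactly those in $\Sym^\ell(\cF_n)$ and the minimal exponent is $(p-1)W_{k_n}$. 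The product of $u$-factors, for $0\le j\le p-1$, is $\prod 1/t!$, which matches the $Q_{(\cF_n)_1,\sigma}Q_{(\cF_n)_2,\sigma}$ appearing in $G_{k_n,p}^\ell$; the factor $U_{k_n,p}^{-1}$ arises precisely because $G_{k_n,\cR}^\ell=U_{k_n,p}G_{k_n,p}^\ell+O(p^{>1})$ by Lemma \ref{L:open}. Summing $\sgn(\sigma)$ over $\sigma\in\Sym^\ell(\cF_n)$ reproduces $G_{k_n,p}^\ell$ up to sign, hence $\pi^{(p-1)W_{k_n}}U_{k_n,p}^{-1}G_{k_n,\cR}$ modulo $\pi^{(p-1)W_{k_n}}O(p^{>1})$.

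The main obstacle I anticipate is bookkeeping the $\pi$-order boundary carefully: I must ensure that (i) terms from $\sigma\notin\Sym^\ell(\cF_n)$ but with $\cS=\cF_n$, and (ii) the $O(\pi^{>})$ tails from Lemma \ref{L:b}, and (iii) contributions from $\cS\in\cK_{k_n}^0$ with $\cS\neq\cF_n$ (if $\cK_{k_n}^0$ has more than one element) all fall into $O(\pi^{p-1})^{>W_{k_n}}$ or $\pi^{(p-1)W_{k_n}}O(p^{>1})$, with no cross-cancellation lowering the order. This requires the strict-minimality statement after \eqref{E:MW} and a uniform (in $p$) control of how far the non-minimal $W(\cS,\sigma)$ exceed $W_{k_n}$, which is exactly the finiteness of the configuration combinatorics once $d_1,d_2$ are fixed. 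A secondary subtlety is that the $u_j=1/j!$ identity only holds for $j\le p-1$; I would check that for $p$ large the relevant floor quantities $t_1,t_2$ (and the swapped ones) are indeed $<p$, which follows since $\bi\in\Delta$ forces these floors to be $O(p/d_i)<p$.
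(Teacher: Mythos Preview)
Your plan follows the same route as the paper's proof: expand $H_{k_n}$, use Lemma~\ref{L:b} to extract leading $\pi$-terms, identify the result with $G_{k_n,p}$, and convert via Lemma~\ref{L:open}. There is, however, one genuine ingredient you do not supply, which the paper handles by citing the ``generic facial decomposition'' of \cite{Zh12}: the identity you propose to verify, that $\sum_{\bi\in\cF_n}(\lfloor\cdots\rfloor+1)=(p-1)W(\cF_n,\sigma)$, holds \emph{only} for permutations $\sigma$ that stabilize the simplex partition $(\cF_n)_1\coprod(\cF_n)_2$. Summing Lemma~\ref{L:b}(3) over $\bi\in\cF_n$ gives
\[
\ord_\pi\prod_{\bi\in\cF_n}B_{p\bi-\sigma(\bi)}
= p\sum_{\bi\in\cF_n}w(\bi)
-\Bigl(\sum_{\bi\in(\cF_n)_1}\tfrac{\sigma(\bi)_1}{d_1}+\sum_{\bi\in(\cF_n)_2}\tfrac{\sigma(\bi)_2}{d_2}\Bigr)
+M(\cF_n,\sigma),
\]
and the bracketed sum equals $\sum_{\bi}w(\bi)$ precisely when $\sigma((\cF_n)_j)=(\cF_n)_j$ for $j=1,2$; if $\sigma$ moves some $\bi$ across the partition the bracketed sum is strictly smaller (e.g.\ $\sigma(\bi)\in(\cF_n)_2$ contributes $\sigma(\bi)_1/d_1<w(\sigma(\bi))$), so the $\pi$-order is strictly larger. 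Without this reduction your identification of the lowest-order coefficient with $G_{k_n,p}^\ell$ does not go through, because the set of permutations that genuinely achieve minimal $\pi$-order and the set $\Sym^\ell(\cF_n)$ need not coincide a priori.

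Two of your anticipated obstacles dissolve: for $k=k_n$ the set $\cK_{k_n}^0$ is the singleton $\{\cF_n\}$, so your point~(iii) is vacuous; and your bound on the floors ensuring $u_j=1/j!$ is correct, since $n\in I_D$ forces $n<D$, whence every $\bi\in\cF_n$ has $w(\bi)<1$ and thus $i_1<d_1$, $i_2<d_2$.
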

\begin{proof}
Let $W(\cF_n,\sigma)$ be as in (\ref{E:MW}).
By Lemma \ref{L:b} and generic facial decomposition from \cite{Zh12},
the $\pi$-adic dominating $\sigma$-term occurs only if 
$\sigma$ stabilizes $(\cF_n)_1$ and $(\cF_n)_2$,
namely $\sigma(\cF_n)_\ell=(\cF_n)_\ell$.
By Lemma \ref{L:open} there exists a least $\ell$ such that 
$G_{k_n,\cR}^\ell$ is nonzero polynomial in
the $\pi$-adic formal expansion of $H_{k_n}$.  
Write $G_{k_n,\cR}=G_{k_n,\cR}^\ell$,
\begin{eqnarray*}
H_{k_n}
&=&
\pi^{(p-1)W_{k_n}}
\sum_{\sigma\in \Sym^\ell(\cS)}
(-1)^{k_n}\sgn(\sigma) Q_{(\cF_n)_1,\sigma}Q_{(\cF_n)_2,\sigma}
\prod_{\bi\in\cF_n} a_{\vec{r}_\sigma}
+O(\pi^{p-1})^{>W_{k_n}}.\nonumber
\\
&=&\pi^{(p-1)W_{k_n}} G_{k_n,p}+ O(\pi^{p-1})^{>W_{k_n}}\\
&=&\pi^{(p-1)W_{k_n}}U_{k_n,p}^{-1} G_{k_n,\cR} + O(\pi^{p-1})^{>W_{k_n}}
+\pi^{(p-1)W_{k_n}}O(p^{>1}),
\end{eqnarray*}
where the last equation follows from Lemma \ref{L:open}.
\end{proof}

Let $f(x_1,x_2)=\sum_{\bv}a_{\bv} \bx^{\bv}$
in $\Q[x_1,x_2]$  where $d_1,d_2\geq 3$. 
Dwork splitting function of $f(x_1,x_2)$ above is defined as
$
E_f(\bx):=\prod_{\bv\in\Delta(f)} E(\pi \ha_{\bv} \bx^\bv)
$
where $\ha_\bv$ is the Teichm\:uller lifting of $a_\bv$
and $\bv\in\Delta(f)\cap\Z^2$.
Then 
\begin{equation}\label{bv}
E_f(\bx)=\sum_{\bv\in S(\Delta)} B_{\bv}(\ha_\bw) \bx^{\vec{v}}
\end{equation}
where 
$B_\bv(\ha_\bw)=B_\bv|_{a_\bw=\ha_\bw}$.

Let  $\psi$ denote the Dwork operator on the $p$-adic Banach space 
\begin{eqnarray*}
\cD&:=&\{\sum_{\bv\in S(\Delta)}\alpha_\bv \bx^\bv\in\bar\Q_p[[\bx]]|
\lim_{|\bv|\rightarrow \infty} p^{\frac{cw(\bv)}{p-1}}|\alpha_\bv|_p = 0
\}
\end{eqnarray*}
for some small enough $c\in \R_{>0}$.
For $\bi\in S(\Delta)$ Dwork operator $\psi=\psi_p\cdot E_f(\bx)$
acts on the monomial basis $\vec{x}^{\bj}$ of $\cD$ as follows
\begin{align*}
	\psi(\vec{x}^{\bj})&=\psi_p(E_f(\vec{x})\cdot \vec{x}^{\bj})
		=\psi_p(\sum_{\vec{v}\in S(\Delta)} B_{\vec{v}}(\ha) \vec{x}^{\vec{v}+\bj})
		=\sum_{\bi\in S(\Delta)} B_{p\bi-\bj}(\ha) \vec{x}^{\bi}.
\end{align*}
With respect to the monomial basis $\{\pi^{w(\bj)}\bx^\bj\}_{\bj\in S(\Delta)}$, 
we have by Dwork theory \cite{Dw62},
\begin{eqnarray*}
C_f^*(\pi,s)
&=&\det(1-\Matrix(\psi) s)
=\det(1-(\pi^{w(\bj)-w(\bi)} B_{p\bi-\bj}))_{\bi,\bj}s)\\
&=&\sum_{k=0}^\infty H_k(\ha_\bv) s^k
\end{eqnarray*}
where $H_k(\ha_\bv)=H_k|_{a_\bv=\ha_\bv}$ as usual.

\begin{proposition}
\label{P:dwork}
(1) Suppose $G_{k_n,\cR}(\ha)\neq 0$
then $\ord_{T^{p-1}}H_{k_n}(\ha)=W_{k_n}$.

\noindent (2)   
Suppose $G_{k_n,\cR}(\ha)\in\bar\Z_p^*$, then for any 
$\pi_\chi\in\bar\Z_p$ with
$\ord_p(\pi_\chi)>0$,
we have 
$\ord_{\pi_{\chi}^{p-1}}H_{k_n}(\ha)=W_{k_n}$.
In particular, the first $D$ terms of the character power series 
specialization at $\pi=\pi_\chi$ are 
\begin{eqnarray*}
C_f^*(\pi_\chi,s)^{[D]}
&=& \sum_{n\in I_D} U_{k_n,p}^{-1} G_{k_n,\cR}(\ha) \pi_\chi^{(p-1)W_{k_n}} s^{k_n} + (\mbox{higher-terms})
\end{eqnarray*}
where higher-terms contains all higher $p$-adic order monomials
in each coefficients of $s^{k_n}$.
\end{proposition}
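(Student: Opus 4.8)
The plan is to read everything off the $\pi$-adic factorization of $H_{k_n}$ in Proposition~\ref{P:Hkn}, after first recording three auxiliary facts. \emph{(i) $U_{k_n,p}$ is a $p$-adic unit.} If $n\in I_D$ then $n<D$, so every $\bi\in\cF_n$ satisfies $w(\bi)=\max(i_1/d_1,i_2/d_2)<1$, i.e. $i_1<d_1$ and $i_2<d_2$; hence each of the integers $\pfloor{pi_1/d_1},\pfloor{pi_2/d_2},\pfloor{p(i_1/d_1-i_2/d_2)},\pfloor{p(i_2/d_2-i_1/d_1)}$ lies in $[0,p-1]$, so its factorial is prime to $p$, and therefore so is the product $U_{k_n,p}$; thus $U_{k_n,p},U_{k_n,p}^{-1}\in\bar\Z_p^{*}$. \emph{(ii) $H_k\in\Zp[[T]]$}, by the Liu--Wan integrality $C_f^*(T,s)\in\Zp[[T]][[s]]$ recalled in Section~\ref{SS:Dwork} together with $C_f^*(\pi,s)=\sum_{k\ge0}H_k(\ha)s^k$; hence every coefficient in the $\pi$-expansion of $H_{k_n}(\ha)$ lies in $\bar\Z_p$ after evaluating at Teichm\"uller lifts $\ha_\bv$. \emph{(iii) $\ord_T(\pi)=1$}, since $T=E(\pi)-1$ has unit linear term in $\pi$; hence $\Zp[[\pi]]=\Zp[[T]]$ and $\ord_{T^{p-1}}=\tfrac1{p-1}\ord_\pi$.

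For part (1), specialize Proposition~\ref{P:Hkn} at $a_\bv=\ha_\bv$ and group $H_{k_n}(\ha)$ by powers of $\pi$: there is no term of $\pi$-order below $(p-1)W_{k_n}$, the coefficient of $\pi^{(p-1)W_{k_n}}$ is $U_{k_n,p}^{-1}G_{k_n,\cR}(\ha)+O(p^{>1})$, and the rest has $\pi$-order $>(p-1)W_{k_n}$. Since $U_{k_n,p}^{-1}$ is a unit and $G_{k_n,\cR}(\ha)\neq0$ (for $p$ large the $O(p^{>1})$ perturbation does not cancel it, and in the regime of interest below $G_{k_n,\cR}(\ha)$ is in fact a unit), this coefficient is a nonzero element of $\bar\Z_p$. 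Hence $\ord_\pi H_{k_n}(\ha)=(p-1)W_{k_n}$, that is $\ord_{T^{p-1}}H_{k_n}(\ha)=W_{k_n}$.

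For part (2), assume $G_{k_n,\cR}(\ha)\in\bar\Z_p^{*}$. By (i) the coefficient $c:=U_{k_n,p}^{-1}G_{k_n,\cR}(\ha)+O(p^{>1})$ of $\pi^{(p-1)W_{k_n}}$ in $H_{k_n}(\ha)$ is then a $p$-adic unit, and by (ii) we may write $H_{k_n}(\ha)=c\,\pi^{(p-1)W_{k_n}}+\sum_{m>(p-1)W_{k_n}}c_m\pi^m$ with all $c_m\in\bar\Z_p$. Substituting $\pi=\pi_\chi$ with $\ord_p(\pi_\chi)>0$: the first term has $p$-adic valuation exactly $(p-1)W_{k_n}\ord_p(\pi_\chi)$, while $\ord_p(c_m\pi_\chi^m)\ge m\ord_p(\pi_\chi)>(p-1)W_{k_n}\ord_p(\pi_\chi)$; hence $\ord_p(H_{k_n}(\ha)|_{\pi=\pi_\chi})=(p-1)W_{k_n}\ord_p(\pi_\chi)$, i.e. $\ord_{\pi_\chi^{p-1}}H_{k_n}(\ha)=W_{k_n}$, with leading term $U_{k_n,p}^{-1}G_{k_n,\cR}(\ha)\pi_\chi^{(p-1)W_{k_n}}$. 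For the final assertion, feed this into $C_f^*(\pi_\chi,s)=\sum_{k\ge0}H_k(\ha)s^k$: by Proposition~\ref{P:2} the abscissas $k_n$, $n\in I_D$, are precisely those of the vertices in the first $D$ horizontal units, so the coefficient of $s^{k_n}$ equals $U_{k_n,p}^{-1}G_{k_n,\cR}(\ha)\pi_\chi^{(p-1)W_{k_n}}$ up to strictly higher $p$-adic order, whereas the intermediate coefficients of $s^k$ ($k_{n-1}<k<k_n$) lie on or above $\HP_C(\Delta)$ by the Liu--Wan estimate (fact (4) of Section~\ref{SS:Dwork}), whose vertices are among the $k_n$, so they do not disturb the leading shape; this is precisely $C_f^*(\pi_\chi,s)^{[D]}$ as displayed.

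The step demanding care --- not a real obstacle --- is the interplay of the two $O(-)$ scales in Proposition~\ref{P:Hkn}, higher $\pi$-order against higher $p$-order, together with the legitimacy of the substitution $\pi\mapsto\pi_\chi$: it is exactly the integrality $H_k\in\Zp[[T]]$ of (ii) that prevents a higher power of $\pi$ --- a priori carrying an unbounded $p$-power denominator --- from contributing a term of $p$-adic valuation $\le(p-1)W_{k_n}\ord_p(\pi_\chi)$ once $\pi_\chi$ has positive valuation. Everything else is the elementary bookkeeping indicated above.
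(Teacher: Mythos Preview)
Your proposal is correct and takes essentially the same approach as the paper, whose entire proof reads ``This statement follows directly from Proposition~\ref{P:Hkn}.'' You have simply unpacked that one-liner: the three auxiliary facts you isolate --- $U_{k_n,p}\in\bar\Z_p^{*}$, the Liu--Wan integrality $H_k\in\Zp[[T]]$, and $\ord_T\pi=1$ --- together with your final paragraph on the interplay of the $\pi$-adic and $p$-adic $O(-)$ scales, are exactly the hidden content behind the paper's sentence, and your care about integrality preventing denominator blow-up upon specialization $\pi\mapsto\pi_\chi$ is the genuine point that the paper leaves implicit.
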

\begin{proof}
This statement follows directlyly from Proposition \ref{P:Hkn}.
\end{proof}

\subsection{Generic slopes for classical character}

Let $\A$ denote the coefficient space of all polynomials 
$f=\sum_{\bv\in\Delta\cap\Z^2}a_\bv \bx^\bv$ 
with $a_{d_1,d_2}=a_{d_1,0}=a_{0,d_2}=1$.
For $p\equiv (1,1)\bmod (d_1,d_2)$ 
the Newton polygons coincide with Hodge polygons (see \cite{Wan93}), 
we exclude this 
trivial residue classes  for the rest of the paper. 
For any $f$ in $\A(\bar\Q)$ we shall always assume 
its residue field mod $p$ is $\F_q$ for some $q=p^a$ and $a\in\Z_{\geq 1}$.
Let 
\begin{eqnarray*}
G:=\prod_{\cR}\prod_{n\in I_D}G_{k_n,\cR}
\end{eqnarray*}
where $\cR$ ranges over all nontrivial residue classes 
in $\Z/d_1\Z\oplus \Z/d_2\Z$,
and $G_{k_n,\cR}=G_{k_n,\cR}^\ell$ is as in Lemma \ref{L:open}.

\begin{proposition}
\label{P:gNP}
Let $\cU$ be the subset in $\A$ defined by 
$G\cdot \prod_{\bv\in\Delta\cap\Z^2}a_{\bv}\neq 0$.
Let $\cU^o$ be the subset of $\A$ defined by 
$G\neq 0$. 
Then $\cU\subset\cU^o$ are both Zariski dense open subsets of $\A$ defined over $\Q$.

\begin{enumerate}
\item Let $f$ in $\cU(\bar\Q)$. For any character $\chi:\Zp\rightarrow 
\C_p^*$ and for $p$ large enough
$$
\NP_{T^{a(p-1)}}C_f^*(T,s)^{[D]}=\NP_{\pi_\chi^{a(p-1)}}C_f^*(\chi,s)^{[D]} 
=\gNP(\Delta,p).
$$
In particular, if $\chi_1$ has conductor $p$, then 
$\NP_qC_f^*(\chi_1,s)^{[D]}=\gNP(\Delta,p)$.
\item 
Let $f\in \A(\Q)$. Then for all $p$ large enough  
and for any character $\chi:\Zp\rightarrow \C_p^*$
$$
\NP_{T^{p-1}}C_f^*(T,s)^{[D]}=\NP_{\pi_\chi^{p-1}}C_f^*(\chi,s)^{[D]} 
=\gNP(\Delta,p)
$$
if and only if $f\in\cU^0(\Q)$.
In particular, 
$\NP_pC_f^*(\chi_1,s)^{[D]}=\gNP(\Delta,p)$.
\end{enumerate}
\end{proposition}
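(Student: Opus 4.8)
The plan is to reduce both statements to the exact leading-term computation of Propositions~\ref{P:Hkn} and~\ref{P:dwork}, combined with the polygon comparison of Proposition~\ref{P:2}, after disposing of the elementary openness claims.

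I would first settle the scheme-theoretic part. By Lemma~\ref{L:open}, for each of the finitely many nontrivial residue classes $\cR$ and each $n\in I_D$ the polynomial $G_{k_n,\cR}=G_{k_n,\cR}^\ell\in\Z[a_\bv]$ is nonzero; hence $G=\prod_\cR\prod_{n\in I_D}G_{k_n,\cR}$ and $G\cdot\prod_\bv a_\bv$ are nonzero elements of $\Z[a_\bv]$, so their non-vanishing loci $\cU\subset\cU^o$ are Zariski dense open subschemes of $\A$ defined over $\Q$. Next, fix $f\in\cU(\bar\Q)$ with coefficients in a number field $K$ and a prime $\mathfrak p\mid p$ of $K$. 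For all $p$ large (depending on $f$) each $a_\bv$ is a $\mathfrak p$-adic unit, so its Teichm\"uller lift $\ha_\bv$ exists with $\ha_\bv\equiv a_\bv\pmod{\mathfrak p}$; since $G_{k_n,\cR}(a)\neq0$ in $K$ for the class $\cR$ of $p$, this gives $G_{k_n,\cR}(\ha)\equiv G_{k_n,\cR}(a)\not\equiv0\pmod{\mathfrak p}$, i.e.\ $G_{k_n,\cR}(\ha)\in\bar\Z_p^\times$, for every $n\in I_D$; one also notes that $U_{k_n,p}$ is a $p$-adic unit for $p$ large, being a product of factorials of integers $<p$. Then Propositions~\ref{P:Hkn} and~\ref{P:dwork} compute the exact order of the coefficient of $s^{k_n}$ in $C_f^*(T,s)$, and in $C_f^*(\chi,s)=C_f^*(T,s)|_{T=\pi_\chi}$, placing its Newton point at $(k_n,W_{k_n})$ in the relevant ($a$-adjusted) normalization, for every $n\in I_D$.

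To pin down the polygon I would bound it from both sides. Adjoining the remaining Newton points $(k,\ord H_k(\ha))$, $0\le k\le D$, can only lower the convex hull, so the (suitably normalized) Newton polygon of $C_f^*(\chi,s)^{[D]}$ is $\le\sNP(\Delta,p)$, the hull of the points $(k_n,W_{k_n})$; by Proposition~\ref{P:2}(2) this equals $\gNP(\Delta,p)$ for $p$ large. For the reverse inequality one upgrades the Liu-Wan bound $\ge\HP_C(\Delta)$ to the coefficient-wise estimate $\ord H_k(\ha)\ge W_k$ for every $0\le k\le D$: by the $\pi$-adic domination argument of Lemma~\ref{L:b} together with the minimality of $\ell$ (the analogue of Lemma~\ref{L:open} for all $k$), the coefficient of $\pi^{j}$ in $H_k$ vanishes identically for every $j$ below the critical order and every $f$. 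Hence every Newton point lies on or above $(k,W_k)$, so the polygon is $\ge\gNP(\Delta,p)$; combining the two bounds gives $\NP_{T^{a(p-1)}}C_f^*(T,s)^{[D]}=\NP_{\pi_\chi^{a(p-1)}}C_f^*(\chi,s)^{[D]}=\gNP(\Delta,p)$. Specializing to $\chi=\chi_1$ of conductor $p$, for which $\ord_q\pi_{\chi_1}=\tfrac1{a(p-1)}$, yields $\NP_qC_f^*(\chi_1,s)^{[D]}=\gNP(\Delta,p)$, the last assertion of (1).

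For part (2), $f\in\A(\Q)$ has residue field $\F_p$, so $a=1$, and the argument above, run whenever $f\in\cU^o(\Q)$ (so that $G_{k_n,\cR}(a)\neq0$ for the class $\cR$ of $p$, hence $G_{k_n,\cR}(\ha)\in\bar\Z_p^\times$ for $p$ large), proves the displayed chain of equalities; in particular $\NP_pC_f^*(\chi_1,s)^{[D]}=\gNP(\Delta,p)$. Conversely, if $f\notin\cU^o(\Q)$ then $G_{k_{n_0},\cR_0}(a)=0$ for some $n_0\in I_D$ and some nontrivial class $\cR_0$; by Dirichlet there are infinitely many primes $p$ in $\cR_0$, and for such $p$ one has $G_{k_{n_0},\cR_0}(\ha)\equiv0\pmod{\mathfrak p}$, so by Proposition~\ref{P:Hkn} the leading coefficient of $H_{k_{n_0}}(\ha)|_{T=\pi_\chi}$ acquires positive $p$-order and $\ord_{\pi_\chi^{p-1}}H_{k_{n_0}}(\ha)>W_{k_{n_0}}$. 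Since the polygon is still $\ge\gNP(\Delta,p)$, every other Newton point lies on or above $\gNP(\Delta,p)$, and $\gNP(\Delta,p)$ has a genuine corner at $(k_{n_0},W_{k_{n_0}})$ for $p$ large, the Newton polygon of $C_f^*(\chi,s)^{[D]}$ cannot pass through that vertex; hence $\NP_{\pi_\chi^{p-1}}C_f^*(\chi,s)^{[D]}\neq\gNP(\Delta,p)$ and the displayed equality fails for these $p$. The step I expect to be the main obstacle is the lower bound $\NP\ge\gNP(\Delta,p)$ valid for \emph{every} $f$: Liu-Wan supply only the coarser $\HP_C(\Delta)$ bound, and extracting the refinement $\varepsilon_n/(p-1)$ at each break point means running the $\pi$-adic domination argument of Lemma~\ref{L:b} and the minimality of $\ell$ for all $k$, not merely for the $\cF_n$. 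A secondary subtlety is that in the converse half of (2) it may be only the $\pi_\chi$-adic equality that fails while the $T$-adic one persists, which is why the statement asserts both; the remaining work is bookkeeping with the normalizations $T\leftrightarrow\pi\leftrightarrow\pi_\chi$ and with the thresholds on $p$ coming from Proposition~\ref{P:2} and Lemmas~\ref{L:open} and~\ref{L:b}.
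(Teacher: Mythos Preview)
Your proposal is correct and follows essentially the same route as the paper: openness via Lemma~\ref{L:open}, the unit property of $G_{k_n,\cR}(\ha)$ via congruence with $G_{k_n,\cR}(a)$, and then Proposition~\ref{P:dwork} combined with $\sNP=\gNP$ from Proposition~\ref{P:2}. A few differences are worth noting. First, where you pass to a number field $K$ and a prime $\mathfrak p\mid p$ directly, the paper instead invokes the Transformation Lemma of \cite[Section~5]{Zh12} to reduce from $\cU(\bar\Q)$ to $\cU(\Z)$; your argument is a valid substitute and makes the reduction self-contained. Second, you are more explicit than the paper about the sandwich $\gNP\le\NP\le\sNP$: the paper simply cites Proposition~\ref{P:dwork} and asserts the equality, leaving implicit the verification that the coefficients $H_k$ for $k\neq k_n$ do not pull the polygon below $\sNP$; you correctly flag this lower bound as the substantive point, and your proposed fix (running the $\pi$-adic domination of Lemma~\ref{L:b} and the minimality of $\ell$ for all $k$, so that the sub-leading coefficients vanish \emph{identically} in the $a_\bv$) is the right mechanism. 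Third, for the converse in part~(2) the paper only says ``similar'', whereas you supply the actual argument via Dirichlet and the fact that $(k_{n_0},W_{k_{n_0}})$ is a genuine vertex of $\gNP(\Delta,p)$ for $p$ large; note that for this direction you do not in fact need the global lower bound $\NP\ge\gNP$---it suffices that a vertex of the Newton polygon must be realized by an actual coefficient, so $\ord H_{k_{n_0}}>W_{k_{n_0}}$ already forces $\NP\neq\gNP$.
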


\begin{proof}
Our fist statement follows from that 
$G$ is nonzero polynomial in $\Q[a_\bv]$ by Lemma \ref{L:open}.

(1) 
By hypothesis, $\prod_{\bv\in\Delta\cap\Z^2}a_{\bv}\ne 0$ 
and hence $\ha_{\bv}\in\Z_p^*\cap \Q$ for all $\bv\in\Delta\cap\Z^2$. 
This reduces to consider $f\in\cU(\Z)$ for $p$ large enough by the 
Transformation Lemma of \cite[Section 5]{Zh12}.
By Lemma \ref{L:open} $G$ is a nonzero polynomial with coefficients in $\Q$, this 
shows that $\cU$ is Zariski dense open subset of $\A$ defined over $\Q$.
Since each $G_{k_n,\cR}$ is a nonzero polynomial with coefficients in $\Q$,
its specialization $G_{k_n,\cR}(a_\bv)$ lies in $\Q$ for all $a_\bv\in\Z$. 
Without loss of generality, fix $\cR$ below.
Let $p\in\cR$ be large enough, $G_{k_n,\cR}(a_{\bv})\in\Zp^*$,
but $G_{k_n,\cR}(\ha_\bv)\equiv G_{k_n,\cR}(a_\bv)\bmod p$,
so $G_{k_n,\cR}(\ha_\bv)\in\Z_p^*$.
But $U_{k,p}\in\Zp^*$, hence for $p$ large enough by Proposition \ref{P:dwork},
$\NP_{T^{p-1}}(C_f^*(T,s)^{[D]})=\NP_{\pi_\chi^{p-1}}C_f^*(\chi,s)^{[D]}
=\sNP(\Delta,p)=\gNP(\Delta,p)$.

(2) This part of the proof is similar to the first part, 
except that we do not need $\prod_{\bv\in\Delta\cap\Z^2}a_\bv\neq 0$ 
in hypothesis as we do not need the aforementioned Transformation Lemma.
\end{proof}

Recall from above that 
\begin{equation*}
\left\{	
\begin{array}{lll}
\GNP_L(\Delta,p)&=&\coprod_{n\in I_D}\{\beta_n(p),(2-\beta_n(p))\}^{W_\Delta(n)}.\\
\GNP_C(\Delta,p)&=&\coprod_{i=1}^{\infty} 
\coprod_{n\in I_D} \{i-1+\beta_n(p), i+1-\beta_n(p)\}^{i\; W_\Delta(n)}.
\end{array}
\right.
\end{equation*}
We shall see below that 
for the classical character $\chi_1$ of conductor $p$, 
the asymptotic generic Newton polygons of $L_f^*$ and $C_f^*$-functions
are given by $\GNP_L(\Delta,p)$ and $\GNP_C(\Delta,p)$, respectively.

\begin{theorem}
\label{T:GNP} Let notation be as in Proposition \ref{P:gNP}.\\
\noindent (1) 
Suppose $f\in\cU(\bar\Q)$ and for $p$ large enough, 
\begin{eqnarray*}
\NP_q(L_f^*(\chi_1,s)^{-1})=
\GNP_L(\Delta,p),\qquad
\NP_q(C_f^*(\chi_1,s)) =
\GNP_C(\Delta,p).	
\end{eqnarray*}

\noindent (2) 
Let $f\in\A(\Q)$. Then for $p$ large enough 
\begin{eqnarray*}
\NP_p(L_f^*(\chi_1,s)^{-1})=
\GNP_L(\Delta,p),\qquad
\NP_p(C_f^*(\chi_1,s)) =
\GNP_C(\Delta,p)	
\end{eqnarray*}
if and only if $f\in \cU^o(\Q)$.
\end{theorem}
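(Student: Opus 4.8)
The plan is to bootstrap Theorem~\ref{T:GNP} from Proposition~\ref{P:gNP}, the relations~(\ref{E:relation1}) between $C_f^*$ and $L_f^*$, and the functional equation of $L_f^*(\chi_1,s)$ for regular $f$. First I would record the ``bottom'' data: by Proposition~\ref{P:gNP}, for $f\in\cU(\bar\Q)$ and $p$ large (resp. $f\in\cU^o(\Q)$ and $p$ large) one has $\NP_q C_f^*(\chi_1,s)^{[D]}=\gNP(\Delta,p)$, which by Proposition~\ref{P:2} has horizontal length $D$ and slopes $\coprod_{n\in I_D}\beta_n(p)^{W_\Delta(n)}$ lying in $[0,1]$. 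Since $\cR$ is nontrivial, for $p$ large each $\beta_n(p)=\frac{n}{D}+\frac{\varepsilon_n}{p-1}$ is strictly $<1$, so the $D$ smallest $q$-adic slopes of $C_f^*(\chi_1,s)$ all lie in $[0,1)$ and are exactly $\coprod_{n\in I_D}\beta_n(p)^{W_\Delta(n)}$.

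Next I would pass to the full $L$-function. Specializing the identity $C_f^*(T,s)=\prod_{j\geq 0}L_f^*(T,q^js)^{-(j+1)}$ from~(\ref{E:relation1}) at $T=\pi_{\chi_1}$, and writing $\Lambda$ for the multiset of $q$-adic slopes of the degree $2d_1d_2$ polynomial $L_f^*(\chi_1,s)^{-1}$ (these lie in $[0,2]$, since its Newton polygon lies above $\HP_L(\Delta)$ with the same endpoints), the $q$-adic slopes of $C_f^*(\chi_1,s)$ are obtained by merging, over all $j\geq0$, the shifted multisets $\Lambda+j$ with every element repeated $j+1$ times. As each element of $\Lambda$ is $\geq0$, only the $j=0$ layer can produce slopes $<1$; hence the slopes of $C_f^*(\chi_1,s)$ below $1$ are exactly the elements of $\Lambda$ below $1$, and comparing with the first step identifies the $D$ smallest elements of $\Lambda$ with $\coprod_{n\in I_D}\beta_n(p)^{W_\Delta(n)}$, all $<1$. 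Now I would invoke the functional equation of $L_f^*(\chi_1,s)$ for regular $f$ (Poincar\'e duality for toric exponential sums), which renders $\Lambda$ stable under $\alpha\mapsto2-\alpha$: since none of the $\beta_n(p)$ equals $1$ and $|\Lambda|=2d_1d_2$, $\Lambda$ can contain no element below $1$ beyond the $\beta_n(p)$'s (otherwise its reflection would overfill $\Lambda$), so $\Lambda=\coprod_{n\in I_D}\{\beta_n(p),\,2-\beta_n(p)\}^{W_\Delta(n)}$, i.e. $\NP_q L_f^*(\chi_1,s)^{-1}=\GNP_L(\Delta,p)$ by Definition~\ref{D:GNP}.

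Feeding this back into $C_f^*(\chi_1,s)=\prod_{j\geq0}L_f^*(\chi_1,q^js)^{-(j+1)}$ exhibits $\NP_q C_f^*(\chi_1,s)$ as exactly the product of $\GNP_L(\Delta,p)$ with the algebraic polygon of $\frac{1}{(1-s)^2}$, which is $\GNP_C(\Delta,p)$ by Definition~\ref{D:GNP}; any coincidences $i-1+\beta_n(p)=i+1-\beta_m(p)$ permitted by Remark~\ref{Remark:vertices} merely merge multiplicities and are already built into the formula there. This settles part~(1), and the ``if'' direction of part~(2) is word-for-word the same with $\NP_q$ replaced by $\NP_p$ and Proposition~\ref{P:gNP}(2) used in place of~(1). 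For the ``only if'' direction, if $f\notin\cU^o(\Q)$ then some factor $G_{k_{n_0},\cR_0}$ of $G$ vanishes at $f$; taking $p\in\cR_0$ large, Proposition~\ref{P:dwork} gives $\ord_{\pi_{\chi_1}^{p-1}}H_{k_{n_0}}(\ha)>W_{k_{n_0}}$, so since $(k_{n_0},W_{k_{n_0}})$ is a vertex of $\gNP(\Delta,p)$ (Proposition~\ref{P:2}) the polygon $\NP_p C_f^*(\chi_1,s)$ lies strictly above $\gNP(\Delta,p)$ at $x=k_{n_0}$, and via the above relations $\NP_p L_f^*(\chi_1,s)^{-1}$ lies strictly above $\GNP_L(\Delta,p)$; as such $p$ are arbitrarily large, the claimed equalities fail to hold for all large $p$.

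The main obstacle is the middle step: Proposition~\ref{P:gNP} controls only the first $D$ coefficients of $C_f^*$, hence pins down only the $D$ slopes of $L_f^*(\chi_1,s)^{-1}$ lying below $1$; the other $D$ slopes are recovered purely from the symmetry $\alpha\mapsto2-\alpha$, so making that symmetry rigorous --- i.e. checking that $L_f^*(\chi_1,s)^{-1}$, after removing any trivial factors, is self-dual of weight $2$ --- is the delicate input. If one prefers to stay inside Dwork theory, an alternative is to rerun the weight estimates of Section~2 at the top of the Hodge polygon (for the dual Dwork operator) to produce the top $D$ slopes directly, at the cost of repeating those computations.
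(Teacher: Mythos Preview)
Your proposal is correct and follows essentially the same route as the paper: use Proposition~\ref{P:gNP} to pin down the first $D$ slopes of $C_f^*(\chi_1,s)$, identify these with the sub-$1$ slopes of $L_f^*(\chi_1,s)^{-1}$ via the relations~(\ref{E:relation1}), invoke the symmetry $\alpha\mapsto 2-\alpha$ of $\NP_q(L_f^*(\chi_1,s)^{-1})$ to recover the remaining $D$ slopes, and then feed back into $C_f^*$. The paper's own proof is terser---it simply asserts $\NP_q(L_f^*)^{[D]}=\NP_q(C_f^*)^{[D]}$ from~(\ref{E:relation1}) and then appeals to ``symmetry of the Newton polygon''---but the substance is identical, and your use of the product expansion $C_f^*=\prod_{j\geq 0}L_f^*(q^js)^{-(j+1)}$ to isolate the slopes below $1$ is a clean way to make that first identification precise. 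Your explicit treatment of the ``only if'' direction in part~(2) is also more detailed than the paper, which simply points back to Proposition~\ref{P:gNP}(2); your argument via the vanishing of some $G_{k_{n_0},\cR_0}$ is exactly what underlies that proposition's converse.
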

\begin{proof}
(1) By (\ref{E:relation1}) and (\ref{E:relation3}), 
$$\NP_q(L_f^*(s)^{-1})^{[D]}=\NP_qC_f^*(s)^{[D]}=\gNP(\Delta,p)=
\coprod_{n\in I_D}\beta_n^{W_\Delta(n)}$$ 
by Proposition \ref{P:gNP} (1).
By symmetry of the Newton polygon $\NP_q(L_f^*(\chi_1,s)^{-1})$,
the rest $D$ slopes are $\coprod_{n\in I_D}(2-\beta_n)^{W_\Delta(n)}$.
In summary $\NP_q(L_f^*(\chi_1,s)^{-1})=\GNP_L(\Delta,p)$. 
Consequently we have $\NP_q(C_f^*(\chi_1,s))=\GNP_C(\Delta,p)$.

\noindent (2) Proof of this part is similar to the above by 
applying Proposition \ref{P:gNP} (2).
\end{proof}

Liu and Wan proved in \cite{LW09}  
that if $\NP_{\pi^{a(p-1)}}C_f^*(\chi,s)=\HP_C(\Delta)$
holds for one nontrivial $\chi$ implies that it holds for all 
non-trivial $\chi$. Application of Theorem \ref{T:GNP} 
yields a similar statement for $\GNP_C(\Delta,p)$ in Theorem \ref{T:GNP2} below,
which proves a 2-variable case of Liu-Wan's
Conjectures 7.10 and 7.11 in \cite{LW09}.

\begin{theorem}[Independency of character]
\label{T:GNP2}
Let $\chi$ be any character $\chi: \Z_p\rightarrow \C_p^*$ of conductor 
$p^{m_\chi}$ for some $m_\chi\in\Z_{\geq 1}$. Let notation be as in 
Proposition \ref{P:gNP}.

\begin{enumerate}
\item 
For any $f\in\cU(\bar\Q)$ and for $p$ large enough
$\NP_{\pi_\chi^{a(p-1)}}(C^*_f(\chi,s))$
is independent of $m_\chi$,
and 
\begin{eqnarray*}
\NP_{\pi_\chi^{a(p-1)}} C^*_f(\chi,s)
=\NP_{T^{a(p-1)}} C_f^*(T,s)
=\GNP_C(\Delta,p).
\end{eqnarray*}
\item 
For any $f\in\cU^o(\Q)$ and for $p$ large enough
$\NP_{\pi_\chi^{p-1}}(C^*_f(\chi,s))$
is independent of $m_\chi$,
and 
\begin{eqnarray*}
\NP_{\pi_\chi^{p-1}} C^*_f(\chi,s)
=\NP_{T^{p-1}} C_f^*(T,s)
=\GNP_C(\Delta,p).
\end{eqnarray*}
\end{enumerate}
\end{theorem}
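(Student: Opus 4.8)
The plan is to bootstrap from Theorem~\ref{T:GNP}, which already pins down $\NP_q(C_f^*(\chi_1,s))$ for the conductor-$p$ character, to arbitrary conductor $p^{m_\chi}$. The essential point is that Proposition~\ref{P:gNP} was proved not just for $\chi_1$ but for \emph{any} character $\chi:\Zp\to\Cp^*$: the crucial input is Proposition~\ref{P:dwork}(2), whose hypothesis ``$\ord_p(\pi_\chi)>0$'' holds for every nontrivial character regardless of conductor. Thus for $f\in\cU(\bar\Q)$ and $p$ large enough, the first $D$ terms satisfy
\begin{eqnarray*}
\NP_{\pi_\chi^{a(p-1)}}C_f^*(\chi,s)^{[D]}=\gNP(\Delta,p)=\NP_{T^{a(p-1)}}C_f^*(T,s)^{[D]},
\end{eqnarray*}
with vertices exactly at $(k_n,W_{k_n})$ by Proposition~\ref{P:2}(2). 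The $T$-adic statement $\NP_{T^{a(p-1)}}C_f^*(T,s)=\GNP_C(\Delta,p)$ comes by specializing the generic $T$-adic slope data already assembled; concretely one reads the equalities for $C_f^*(T,s)$ off the character power series factorization and the relation $C_f^*(T,s)=\prod_{j\ge 0}L_f^*(T,q^js)^{-\binom{1+j}{j}}$ from (\ref{E:relation1}) together with Definition~\ref{D:GNP}.

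The remaining work is to promote the truncated (first $D$ terms) equality to an equality of the \emph{entire} Newton polygon of $C_f^*(\chi,s)$. Here I would invoke the structural relation (\ref{E:relation1}) in the form $C_f^*(T,s)=L_f^*(T,s)^{-1}L_f^*(T,qs)^{-2}L_f^*(T,q^2s)^{-3}\cdots$: once one knows the full Newton polygon of $L_f^*(\chi,s)^{-1}$, the polygon of $C_f^*(\chi,s)$ is forced. For $L_f^*(\chi,s)^{-1}$, recall its degree is $p^{2(m_\chi-1)}\cdot 2!\,V(\Delta)=2p^{2(m_\chi-1)}d_1d_2$ (by \cite{AS89}, \cite{LW07}), it is symmetric, and its lower convex Newton polygon is bounded below by the Hodge-type bound from Liu--Wan (facts (2)--(4) in Section~\ref{SS:Dwork}). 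Combining the symmetry, the known first $D$ slopes $\coprod_{n\in I_D}\beta_n(p)^{W_\Delta(n)}$ (hence the last $D$ slopes $\coprod_{n\in I_D}(2-\beta_n(p))^{W_\Delta(n)}$), and a matching of total $y$-displacement with the Hodge bound, one concludes $\NP_{\pi_\chi^{a(p-1)}}L_f^*(\chi,s)^{-1}=\GNP_L(\Delta,p)$ independently of $m_\chi$ — this is precisely what Theorem~\ref{T:arithmeticProg} records in its weighted-arithmetic-progression form, and I would quote that structure. From $\GNP_L$ we recover $\GNP_C$ through the $\frac{1}{(1-s)^2}$ twist of Definition~\ref{D:GNP}, giving $\NP_{\pi_\chi^{a(p-1)}}C_f^*(\chi,s)=\GNP_C(\Delta,p)$.

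Part (2), for $f\in\cU^o(\Q)$, runs identically but using Proposition~\ref{P:gNP}(2) and Theorem~\ref{T:GNP}(2) in place of their part (1) counterparts: there one does not need $\prod_\bv a_\bv\neq 0$ nor the Transformation Lemma of \cite{Zh12}, and the normalizing exponent drops from $a(p-1)$ to $p-1$ since $f$ is already defined over $\Q$ with residue field $\F_p$. The ``if and only if'' in part (2) of Theorem~\ref{T:GNP} transfers verbatim to the character statement because the equality for $\chi$ at the first $D$ terms is equivalent to $G_{k_n,\cR}(\ha)\in\bar\Z_p^*$ for all $n\in I_D$, i.e. to $f\in\cU^o(\Q)$.

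The main obstacle I anticipate is the globalization step: making rigorous that the Liu--Wan lower bounds (facts (2)--(4)) are met with equality \emph{beyond} the first $D$ terms, uniformly in $m_\chi$, rather than just in the truncation $C_f^*(\chi,s)^{[D]}$. The truncation argument only controls slopes in $[0,1)$ arising from $\cF_n\subset S(\Delta)$; the higher slopes of $C_f^*$ encode lattice points of weight $\ge 1$, and one must argue that the generic condition $G\neq 0$, which was designed around the first-$D$-term behavior, already propagates — via the multiplicative relation (\ref{E:relation1}) and the known exact degree and symmetry of $L_f^*(\chi,s)^{-1}$ — to pin every slope. I expect this is handled by observing that $\GNP_L(\Delta,p)$ already has the correct horizontal length $2d_1d_2$ and the correct total height (matching $2\sum_{n\in I_D}\beta_n(p)W_\Delta(n)$, which tends to $2!\,V(\Delta)$'s Hodge value), so that the lower bound plus the pinned extreme slopes force equality throughout; the character-independence is then automatic because all the data $W_\Delta(n)$, $\varepsilon_n$, $\beta_n(p)$ depend only on $\Delta$ and $\cR$, not on $m_\chi$.
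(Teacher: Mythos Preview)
Your approach misses the short sandwich argument the paper uses, and the route you take instead runs into a genuine circularity.

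The paper's proof is essentially three lines. For every nontrivial $\chi$ one has the specialization inequality
\[
\NP_{\pi_\chi^{a(p-1)}} C_f^*(\chi,s)\;\geq\;\NP_{T^{a(p-1)}} C_f^*(T,s),
\]
simply because $C_f^*(T,s)\in\Z_p[[T]][[s]]$ and evaluation $T\mapsto\pi_\chi$ can only raise the $\pi_\chi$-adic valuation of each coefficient. The paper couples this with a lower bound $\NP_{T^{a(p-1)}}C_f^*(T,s)\geq\GNP_C(\Delta,p)$ and then plugs in $\chi=\chi_1$: since Theorem~\ref{T:GNP}(1) gives $\NP_{\pi_{\chi_1}^{a(p-1)}}C_f^*(\chi_1,s)=\GNP_C(\Delta,p)$, the chain collapses to equalities, in particular $\NP_{T^{a(p-1)}}C_f^*(T,s)=\GNP_C(\Delta,p)$. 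The point is that the $T$-adic polygon is one fixed object sitting below every $\pi_\chi$-adic polygon, and the single character $\chi_1$ already pins it; no separate analysis of $L_f^*(\chi_m,s)^{-1}$ is required.

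Your proposal instead tries to compute $\NP_{\pi_\chi^{a(p-1)}}L_f^*(\chi,s)^{-1}$ directly and then recover $C_f^*$ via the infinite product~(\ref{E:relation1}). Two things go wrong. First, the assertion $\NP_{\pi_\chi^{a(p-1)}}L_f^*(\chi,s)^{-1}=\GNP_L(\Delta,p)$ cannot hold for $m_\chi>1$: you yourself record that $L_f^*(\chi_m,s)^{-1}$ has degree $2d_1d_2\,p^{2(m_\chi-1)}$, whereas $\GNP_L(\Delta,p)$ has horizontal length only $2d_1d_2$. Knowing the first $D$ slopes and, by symmetry, the last $D$ still leaves $2d_1d_2\bigl(p^{2(m_\chi-1)}-1\bigr)$ slopes undetermined, and neither the Liu--Wan Hodge bound nor a ``total $y$-displacement'' count fixes them. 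Second, to fill that gap you invoke Theorem~\ref{T:arithmeticProg}; but in the paper that result is \emph{deduced from} Theorem~\ref{T:GNP2} (via Theorem~\ref{T:character}), so the citation is circular. The logical order is the reverse of yours: first establish character-independence of $C_f^*$ by the $T$-adic sandwich, and only afterwards read off the weighted arithmetic progression for $L_f^*(\chi_m,s)^{-1}$.
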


\begin{proof}
Notice that for any nontrivial character $\chi$,
$$ 
\NP_{\pi_\chi^{a(p-1)}}C_f^*(\chi,s)\geq 
\NP_{T^{a(p-1)}}C_f^*(T,s)\geq 
\GNP_C(\Delta,p).
$$
But 
$\NP_{\pi_{\chi_1}^{a(p-1)}}C_f^*(\chi_1,s)=\GNP_C(\Delta,p)$ by Theorem \ref{T:GNP}(1), thus the equalities hold above and that is
$\NP_{T^{a(p-1)}}C_f^*(T,s)=\GNP_C(\Delta,p)$. This proves Part (1).
Proof of Part (2) follows from same argument 
via applying Theorem \ref{T:GNP}(2).
\end{proof}

Combining this theorem and Remark \ref{Remark:vertices} one observes the 
following statements:

\begin{corollary}
(1) For any $f\in\cU(\bar\Q)$ we have
$\lim_{p\rightarrow\infty}\NP_{\pi_\chi^{a(p-1)}} C^*_f(\chi,s)
=\lim_{p\rightarrow\infty} \GNP_C(\Delta,p) = \HP_C(\Delta).$\\
(2) For any $f\in\cU^0(\Q)$ we have
$\lim_{p\rightarrow\infty}\NP_{\pi_\chi^{p-1}} C^*_f(\chi,s)
=\lim_{p\rightarrow\infty} \GNP_C(\Delta,p) = \HP_C(\Delta).$
\end{corollary}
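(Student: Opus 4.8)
The plan is to deduce the corollary directly from Theorem~\ref{T:GNP2} together with part~(2) of Remark~\ref{Remark:vertices}, so that the only substantive point is the convergence $\GNP_C(\Delta,p)\to\HP_C(\Delta)$ as $p\to\infty$. First I would fix $f\in\cU(\bar\Q)$ and recall from Theorem~\ref{T:GNP2}(1) that there is a threshold $p_0$, depending only on $d_1,d_2$, such that for every prime $p>p_0$ one has
\[
\NP_{\pi_\chi^{a(p-1)}}C_f^*(\chi,s)=\GNP_C(\Delta,p);
\]
here $p_0$ can be taken uniform over the nontrivial residue classes $\cR$ in $\Z/d_1\Z\oplus\Z/d_2\Z$ because there are only finitely many of them. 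Consequently the families $\{\NP_{\pi_\chi^{a(p-1)}}C_f^*(\chi,s)\}_p$ and $\{\GNP_C(\Delta,p)\}_p$ agree for all large $p$, so their limits as $p\to\infty$ coincide as soon as one of them is shown to exist.

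Next I would identify $\lim_{p\to\infty}\GNP_C(\Delta,p)$. By Remark~\ref{Remark:vertices}(1) the slopes of $\GNP_C(\Delta,p)$ are $\coprod_{i\geq1}\coprod_{n\in I_D}\{i-1+\beta_n(p),\,i+1-\beta_n(p)\}^{i\,W_\Delta(n)}$ with $\beta_n(p)=\frac{n}{D}+\frac{\varepsilon_n}{p-1}$, and by Proposition~\ref{P:2}(3) the numbers $\varepsilon_n$ lie in $\frac{\Z_{\geq0}}{\lcm(d_1,d_2)}$ and are bounded purely in terms of $d_1,d_2$. Hence $\beta_n(p)\to\frac{n}{D}$ as $p\to\infty$, uniformly across residue classes, so on every finite truncation the vertices of $\GNP_C(\Delta,p)$ converge to those of $\HP_C(\Delta)$ as displayed in (\ref{E:HPC}); this is precisely the limit already recorded in Remark~\ref{Remark:vertices}(2), which I would simply cite. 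Combining the two paragraphs gives $\lim_{p\to\infty}\NP_{\pi_\chi^{a(p-1)}}C_f^*(\chi,s)=\HP_C(\Delta)$, and running the identical argument with $a=1$, using Theorem~\ref{T:GNP2}(2) and $\cU^o(\Q)$ in place of $\cU(\bar\Q)$, yields part~(2).

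The statement carries no genuine difficulty, and the closest thing to an obstacle is purely bookkeeping: one must make sense of "$\lim_{p\to\infty}$" even though $\GNP_C(\Delta,p)$ and the quoted equality in Theorem~\ref{T:GNP2} are each attached to a fixed residue class $\cR$. The remedy, already implicit above, is that both the bound on $\varepsilon_n$ and the threshold $p_0$ are uniform over the finitely many nontrivial classes, so the limit is legitimately taken along all (sufficiently large) primes and the two sides track each other all the way down to $\HP_C(\Delta)$.
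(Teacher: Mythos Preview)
Your proposal is correct and follows essentially the same approach as the paper: the paper simply remarks that the corollary follows by combining Theorem~\ref{T:GNP2} with Remark~\ref{Remark:vertices}, and you have spelled out precisely this combination, adding the (helpful but routine) observation that the finitely many residue classes can be handled uniformly.
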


\subsection{Generic slopes for higher characters and arithmetic progression}

We will use the Independency of character theorem \ref{T:GNP2} to compute 
the $q$-adic Newton slopes for any character $\chi_m$ of conductor $p^m$.

\begin{definition}[Formula for generic Newton polygons: higher order character]
\label{D:GNP_chi}
For any $n,i\geq 1$ and $t>1$ let
$\beta_{n,i}(t):=\frac{1}{t^{m-1}}(i-1+\frac{n}{D}+\frac{\varepsilon_n}{t-1})$.
Let $N_{n,i}(t):=i W_\Delta(n)$ if $1\leq i\leq t^{m-1}$;
and $N_{n,i}(t):=(2t^{m-1}-i)W_\Delta(n)$ if $t^{m-1}<i\leq 2t^{m-1}-1$.
Let 
\begin{eqnarray*}
\GNP_L(\Delta,m,t) 
:=
\NP_T\prod_{i=1}^{2t^{m-1}-1}\prod_{n\in I_D}
\left((1-(T^{\beta_{n,i}(t)}s))(1-(T^{2-\beta_{n,i}(t)}s))\right)^{N_{n,i}(t)}.
\end{eqnarray*}
Let $\GNP_C(\Delta,m,t)$ be the product of $\GNP_L(\Delta,m,t)$ and the algebraic polygon of $\frac{1}{(1-s)^2}$.
\end{definition}

\begin{remark}\label{Remark:two}
\begin{enumerate}
\item 	
One observes
$\beta_{n,1}(t)=\beta_n(t)$
and $\beta_{n,i}(t)=\frac{i-1+\beta_n(t)}{t^{m-1}}$ for all $i\in\Z_{\geq 1}$.
Also one notices that  $\GNP_L(\Delta,m,p)$ has length-($2d_1d_2p^{2(m-1)}$) 
\begin{eqnarray*}
\GNP_L(\Delta,m,p)=\coprod_{i=1}^{2p^{m-1}-1}
\coprod_{n\in I_D}
\left\{\beta_{n,i}(p),2-\beta_{n,i}(p)\right\}^{N_{n,i}}.
\end{eqnarray*}

\item
A direct computation shows that 
$$
\GNP_C(\Delta,m,t)=\coprod_{i=1}^{\infty}\coprod_{n\in I_D}
\{\frac{i-1+\beta_n(t)}{t^{m-1}},\frac{i+1-\beta_n(t)}{t^{m-1}}\}^{iW_\Delta(n)}.
$$

\item 
If we write $\GNP_L(\Delta,t)=\coprod_{j=1}^{d}\alpha_j$, then 
$$
\GNP_L(\Delta,m,t):=\coprod_{i=1}^{t^{m-1}-1}\coprod_{j=1}^{d}
(\frac{i-1+\alpha_j}{t^{m-1}})^{N(i)} 
$$  
where $N(i)=i$ for $1\leq i\leq t^{m-1}$ and $N(i)=2t^{m-1}-i$
for $t^{m-1}<i\leq 2t^{m-1}-1$. 
In particular, $\GNP_L(\Delta,m,p)$ is 
$p^m$-adic weighted arithmetic progression of $\GNP_L(\Delta,p)$.
\end{enumerate}
\end{remark}

In the following theorem, we prove the existence of global generic polynomials 
whose reduction at all but finitely many prime $p$ is generic.

\begin{theorem}
\label{T:character}
Let $\chi_m:\Zp\rightarrow \C_p^*$ be any character of conductor $p^m$ with $m\geq 1$. Let notation be as in Proposition \ref{P:gNP}.
\begin{enumerate}
\item 
For any $f\in\cU(\bar\Q)$, for $p$ large enough 
we have 
$$\NP_q(L_f^*(\chi_m,s)^{-1})=\GNP_L(\Delta,m,p)$$
for all $m\geq 1$.

\item 
Let $f\in \A(\Q)$.
Then for all $p$ large enough 
$$
\NP_p(L_f^*(\chi_m,s)^{-1})=\GNP_L(\Delta,m,p)
$$ 
for any $m\geq 1$ 
if and only if $f\in\cU^o(\Q)$.
\end{enumerate}
\end{theorem}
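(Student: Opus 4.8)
The plan is to derive Theorem~\ref{T:character} from Theorem~\ref{T:GNP2} (independency of character) together with the product decomposition \eqref{E:relation1} relating $L_f^*$ and $C_f^*$, exactly mirroring the way Theorem~\ref{T:GNP} was deduced from Proposition~\ref{P:gNP}. The crucial input is that, for a character $\chi_m$ of conductor $p^m$, one has $\pi_{\chi_m} = \zeta_{p^m}-1$ with $\ord_p(\pi_{\chi_m}) = \frac{1}{p^{m-1}(p-1)}$, so that $q$-adic (equivalently $\pi_{\chi_m}^{a(p-1)}$-normalized) slopes of $C_f^*(\chi_m,s)$ are obtained from the $\NP_{\pi_{\chi_m}^{a(p-1)}}$ Newton polygon by an explicit rescaling. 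The first step is therefore purely bookkeeping: translate the equality $\NP_{\pi_{\chi_m}^{a(p-1)}}C_f^*(\chi_m,s) = \GNP_C(\Delta,p)$ from Theorem~\ref{T:GNP2}(1) into a statement about $\NP_q C_f^*(\chi_m,s)$, checking that the rescaling by the ramification index $p^{m-1}$ turns the slopes $\{i-1+\beta_n(p),\,i+1-\beta_n(p)\}^{iW_\Delta(n)}$ of $\GNP_C(\Delta,p)$ into $\{\beta_{n,i}(p),\,2-\beta_{n,i}(p)\}$ with the weights $N_{n,i}$ prescribed in Definition~\ref{D:GNP_chi}; this is precisely the content of Remark~\ref{Remark:two}(2).

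The second step is to pass from $C_f^*$ to $L_f^*$. Using the first relation in \eqref{E:relation1}, $L_f^*(T,s)^{-1} = C_f^*(T,s)C_f^*(T,q^2s)/C_f^*(T,qs)^2$, and specializing at $T=\pi_{\chi_m}$, the Newton polygon of $L_f^*(\chi_m,s)^{-1}$ is recovered from that of $C_f^*(\chi_m,s)$: the factor $C_f^*(T,q^is)$ shifts all slopes up by $i$ (in the $q$-adic normalization), so the $L^*$-side extracts, from the slopes $< 2$ of the $C^*$-side, exactly those contributing to $\GNP_L$. Concretely, since $\GNP_C(\Delta,m,p)$ is the product of $\GNP_L(\Delta,m,p)$ with the algebraic polygon of $\frac{1}{(1-s)^2}$ (Definition~\ref{D:GNP_chi}), and the $C^*$-to-$L^*$ passage is inverse to multiplying by $\frac{1}{(1-s)^2}$ in the appropriate sense, one reads off $\NP_q(L_f^*(\chi_m,s)^{-1}) = \GNP_L(\Delta,m,p)$. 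The degree bookkeeping must be checked against the known degree $p^{2(m-1)}2!V(\Delta) = 2d_1d_2\,p^{2(m-1)}$ of $L_f^*(\chi_m,s)^{-1}$ from \cite{LW07}, which matches the length of $\GNP_L(\Delta,m,p)$ recorded in Remark~\ref{Remark:two}(1); this confirms that no slopes are lost in the extraction. For part (2), the same argument applies verbatim with the $p$-adic (rather than $q$-adic) normalization and $\cU^o$ in place of $\cU$, invoking Theorem~\ref{T:GNP2}(2) instead of (1), and the ``if and only if'' direction follows since $\NP_{\pi_{\chi_m}^{p-1}}C_f^*(\chi_m,s) = \GNP_C(\Delta,p)$ fails whenever $f\notin\cU^o(\Q)$ by Proposition~\ref{P:gNP}(2) applied to $\chi_1$.

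The main obstacle I anticipate is not conceptual but organizational: making the rescaling-by-ramification-index argument fully rigorous requires care about which normalization of the Newton polygon is in force at each stage ($T$-adic versus $\pi_\chi$-adic versus $q$-adic versus $p$-adic), since the problem genuinely has four interacting valuations. One must verify that $\NP_{\pi_{\chi_m}^{a(p-1)}}(-)$ agrees with the $q$-adic Newton polygon rescaled by $p^{m-1}$ — i.e.\ that $\ord_{\pi_{\chi_m}}(q) = a(p-1)p^{m-1}$ — and then that this rescaling is compatible with the product operation on Newton polygons used throughout. The only other point needing attention is that Theorem~\ref{T:GNP2} gives the full Newton polygon of $C_f^*(\chi_m,s)$ (all infinitely many slopes), not merely its first $D$ terms, so the extraction of $\GNP_L$ via \eqref{E:relation1} is unambiguous; this is exactly why the independency-of-character theorem, rather than just Proposition~\ref{P:gNP}, is the right tool here.

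\begin{proof}
(1) Fix $f\in\cU(\bar\Q)$ and let $p$ be large enough so that Theorem~\ref{T:GNP2}(1) applies. Since $\pi_{\chi_m}=\zeta_{p^m}-1$ satisfies $\ord_p(\pi_{\chi_m})=\frac{1}{p^{m-1}(p-1)}$, we have $\ord_{\pi_{\chi_m}}(q)=a(p-1)p^{m-1}$, so the $q$-adic Newton polygon of any power series is obtained from its $\pi_{\chi_m}^{a(p-1)}$-normalized Newton polygon by dividing all slopes by $p^{m-1}$. By Theorem~\ref{T:GNP2}(1), $\NP_{\pi_{\chi_m}^{a(p-1)}}C_f^*(\chi_m,s)=\GNP_C(\Delta,p)$, whose slopes are $\coprod_{i\geq 1}\coprod_{n\in I_D}\{i-1+\beta_n(p),\,i+1-\beta_n(p)\}^{iW_\Delta(n)}$. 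Dividing by $p^{m-1}$ and invoking $\beta_{n,i}(p)=\frac{i-1+\beta_n(p)}{p^{m-1}}$ from Remark~\ref{Remark:two}(1), we get
$$
\NP_q\bigl(C_f^*(\chi_m,s)\bigr)=\coprod_{i\geq 1}\coprod_{n\in I_D}\Bigl\{\tfrac{i-1+\beta_n(p)}{p^{m-1}},\,\tfrac{i+1-\beta_n(p)}{p^{m-1}}\Bigr\}^{iW_\Delta(n)}=\GNP_C(\Delta,m,p)
$$
by Remark~\ref{Remark:two}(2). Now apply the second relation in \eqref{E:relation1} specialized at $T=\pi_{\chi_m}$: $C_f^*(\chi_m,s)=L_f^*(\chi_m,s)^{-1}L_f^*(\chi_m,qs)^{-2}L_f^*(\chi_m,q^2s)^{-3}\cdots$. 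In the $q$-adic normalization, multiplication by $L_f^*(\chi_m,q^js)^{-1}$ shifts slopes up by $j$; since the slopes of $\GNP_L(\Delta,m,p)=\coprod_{n\in I_D}\{\beta_n(p)/p^{m-1}\text{-shifted pattern}\}$ all lie in $[0,2]$ after rescaling — explicitly they are $\coprod_{i=1}^{2p^{m-1}-1}\coprod_{n\in I_D}\{\beta_{n,i}(p),2-\beta_{n,i}(p)\}^{N_{n,i}}$ by Remark~\ref{Remark:two}(1) — the identity $\GNP_C(\Delta,m,p)=\GNP_L(\Delta,m,p)\cdot\frac{1}{(1-s)^2}$ from Definition~\ref{D:GNP_chi} shows that the factorization of $\GNP_C(\Delta,m,p)$ forced by \eqref{E:relation1} has first (lowest) factor exactly $\GNP_L(\Delta,m,p)$. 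By uniqueness of such a factorization and the fact that $L_f^*(\chi_m,s)^{-1}$ is a polynomial of degree $2d_1d_2\,p^{2(m-1)}$ (by \cite[Theorem 1.3]{LW07}) matching the length of $\GNP_L(\Delta,m,p)$ in Remark~\ref{Remark:two}(1), we conclude $\NP_q(L_f^*(\chi_m,s)^{-1})=\GNP_L(\Delta,m,p)$.

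(2) The same argument applies with $\ord_{\pi_{\chi_m}}(p)=(p-1)p^{m-1}$ in place of $\ord_{\pi_{\chi_m}}(q)$, using the $p$-adic normalization throughout and invoking Theorem~\ref{T:GNP2}(2) in place of (1). This shows that $f\in\cU^o(\Q)$ implies $\NP_p(L_f^*(\chi_m,s)^{-1})=\GNP_L(\Delta,m,p)$ for all $m\geq 1$ and all $p$ large enough. Conversely, if $f\in\A(\Q)\setminus\cU^o(\Q)$, then $\NP_{\pi_{\chi_1}^{p-1}}C_f^*(\chi_1,s)^{[D]}\neq\gNP(\Delta,p)$ by Proposition~\ref{P:gNP}(2), hence $\NP_p(L_f^*(\chi_1,s)^{-1})\neq\GNP_L(\Delta,p)=\GNP_L(\Delta,1,p)$ by the $m=1$ case of the equivalence in Theorem~\ref{T:GNP}(2), so the displayed equality fails for $m=1$.
\end{proof}
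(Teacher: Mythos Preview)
Your proof is correct and follows the same route as the paper: invoke Theorem~\ref{T:GNP2} to obtain $\NP_{\pi_{\chi_m}^{a(p-1)}}C_f^*(\chi_m,s)=\GNP_C(\Delta,p)$, rescale by $p^{m-1}$ to pass to the $q$-adic normalization, and then extract $\NP_q L_f^*(\chi_m,s)^{-1}$ via the relations~\eqref{E:relation1}. The only cosmetic difference is that the paper uses the first relation in~\eqref{E:relation1} and carries out the slope cancellation explicitly, whereas you invoke the second relation together with the identity $\GNP_C(\Delta,m,p)=\GNP_L(\Delta,m,p)\cdot\frac{1}{(1-s)^2}$; since these two relations are inverse to each other, your ``uniqueness'' step is justified exactly by the first relation, and the arguments are equivalent.
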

\begin{proof}
By Theorem \ref{T:GNP2}(1) for $f\in\cU(\bar\Q)$, 
$\NP_{\pi_{\chi_m}^{a(p-1)}}C_f^*(\chi_m,s)=\GNP_C(\Delta,m,p)$.
As $(q)=(\pi_{\chi_m}^{a(p-1)})^{p^{m-1}}$,
$\NP_q(C_f^*(\chi_m,s))=\coprod_{i=1}^{\infty}\prod_{n\in I_D}(\frac{i-1+\beta_n(p)}{p^{m-1}})^{i W_\Delta(n)}$.
By (\ref{E:relation1}), $\NP_q(L_f^*(\chi_m,s)^{-1})$ 
has precisely $2Dp^{2(m-1)}$ slopes in the following set in $[0,2]$
\begin{eqnarray*}
&&\coprod_{i=1}^{2p^{m-1}-1}\coprod_{n\in I_D}
(\frac{i-1+\beta_n(p)}{p^{m-1}})^{iW_\Delta(n)} -
\coprod_{i=1}^{p^{m-1}+1}\coprod_{n\in I_D}
(1+\frac{i-1+\beta_n(p)}{p^{m-1}})^{2iW_\Delta(n)}.
\end{eqnarray*}
After cancellation, 
\begin{eqnarray*}
\NP_q(L_f^*(\chi_m,s)^{-1})=
\coprod_{i=1}^{2p^{m-1}-1}\coprod_{n\in I_D}
(\frac{i-1+\beta_n(p)}{p^{m-1}})^{N_{n,i}(p)}.
\end{eqnarray*}
By Remark \ref{Remark:two}, this proves Part (1).
Proof of Part (2) follows the same argument and Theorem \ref{T:GNP2} (2).
\end{proof}

Finally we shall prove our main theorems (Theorems \ref{T:1} and 
Theorem \ref{T:arithmeticProg}) of this paper below.

\begin{theorem}[Main Theorem]
\label{T:6}
Fix $d_1,d_2\geq 3$ that defines $\Delta$.
Let $\A_\Delta$ be the (coefficient) parameter space of all polynomial 
$f=\sum_{\bv\in \Delta} a_{\bv} \bx^\bv$ with $(x_1,x_2)$-degree equal to $(d_1,d_2)$. Let $\GNP_L(\Delta,m,t)$ and $\GNP_C(\Delta,m,t)$
be defined in Definition \ref{D:GNP_chi} for nontrivial residue class $\cR$.
\begin{enumerate}
\item
There is a Zariski dense open subset $\cU_\Delta$ in $\A_\Delta$ defined over $\Q$ such that,
for all $f\in \cU_\Delta(\bar\Q)$ and prime $p$ large enough in $\cR$
we have 
\begin{eqnarray*}
\NP_q L_f^*(\chi_m,s)^{-1}&=&\GNP_L(\Delta,m,p)\\
\NP_{\pi_{\chi_m}^{a(p-1)}}(C_f^*(\chi_m,s))
&=&\NP_{T^{a(p-1)}}(C_f^*(T,s))
=\GNP_C(\Delta,m,p).
\end{eqnarray*}
for any character $\chi_m:\Z_p\rightarrow \Cp^*$ of conductor $p^m$ 
with $m\geq 1$.
\item
There is a Zariski dense open subset $\cU_\Delta^o$ in 
$\A_\Delta$ defined over $\Q$ such that,
for all $f\in \cU_\Delta^o(\Q)$ and prime $p$ large enough in $\cR$
we have 
\begin{eqnarray*}
\NP_p L_f^*(\chi_m,s)^{-1}&=&\GNP_L(\Delta,m,p)\\
\NP_{\pi_{\chi_m}^{p-1}}(C_f^*(\chi_m,s))
&=&\NP_{T^{p-1}}(C_f^*(T,s))
=\GNP_C(\Delta,m,p).
\end{eqnarray*}
for any character $\chi_m:\Z_p\rightarrow \Cp^*$ of conductor $p^m$ 
with $m\geq 1$.
\end{enumerate}
\end{theorem}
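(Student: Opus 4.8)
The plan is to deduce Theorem \ref{T:6} from what has already been proved for the \emph{normalized} coefficient space $\A$, namely Theorem \ref{T:character} together with the character--independence Theorem \ref{T:GNP2}, by transporting those statements across the normalization using the elementary rescaling symmetries of the tower. The only structural difference between $\A_\Delta$ and $\A$ is that in $\A$ the three corner coefficients $a_{d_1,d_2},a_{d_1,0},a_{0,d_2}$ are pinned to $1$, while in $\A_\Delta$ they and the constant term $a_{0,0}$ are free; so the work is entirely in passing between these two families.

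First I would record the relevant symmetries. For $f\in\A_\Delta(\bar\Q)$ with all four corners nonzero, the monomial substitution $x_i\mapsto\lambda_i x_i$ together with the scalar twist $f\mapsto\mu f$ sends the triple $(a_{d_1,0},a_{0,d_2},a_{d_1,d_2})$ to $(\mu\lambda_1^{d_1}a_{d_1,0},\ \mu\lambda_2^{d_2}a_{0,d_2},\ \mu\lambda_1^{d_1}\lambda_2^{d_2}a_{d_1,d_2})$, and one checks this system can be solved for $\lambda_1,\lambda_2,\mu$ so that all three corners become $1$; a subsequent translation $f\mapsto f-a_{0,0}$ removes the constant term. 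Each of these three operations leaves the $q$-adic (resp.\ $p$-adic) Newton polygon of $L_f^*(\chi_m,s)^{-1}$ and $C_f^*(\chi_m,s)$, as well as the $T$-adic Newton polygon of $C_f^*(T,s)$, unchanged: the monomial substitution induces a bijection of $\Gm^2(\F_{q^k})$ and so literally preserves $S_f^*(k,T)$; the translation only multiplies $S_f^*(k,T)$ by $\theta^k$ for the $T$-adic unit $\theta=(1+T)^{\Tr_{\Q_q/\Q_p}(\hat a_{0,0})}$, i.e.\ replaces $s$ by $\theta s$, hence does nothing to Newton polygons; and for the scalar twist $f\mapsto\mu f$ I would invoke the Transformation Lemma of \cite[Section 5]{Zh12} (equivalently, the $T$-adic character--independence phenomenon underlying Theorem \ref{T:GNP2}). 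Thus every $f\in\A_\Delta(\bar\Q)$ with nonvanishing corners is equivalent, after such a coordinate change, to some $\tilde f\in\A(\bar\Q)$ with identical Newton polygons in all the relevant normalizations; the same discussion applies over $\Q$.

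Next I would define $\cU_\Delta$ (resp.\ $\cU_\Delta^o$) to be the locus in $\A_\Delta$ where $a_{0,0}a_{d_1,0}a_{0,d_2}a_{d_1,d_2}\neq0$ (together with $\prod_\bv a_\bv\neq0$ in the case of $\cU_\Delta$) and the associated $\tilde f$ lies in $\cU$ (resp.\ $\cU^o$); concretely, substituting the above values of $\lambda_i,\mu$ into the defining polynomial $G=\prod_\cR\prod_{n\in I_D}G_{k_n,\cR}$ of $\cU$ and clearing denominators produces a genuine polynomial in $\Q[a_\bv:\bv\in\Delta\cap\Z^2]$ whose nonvanishing cuts out the desired set. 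Since $G$ has $\Q$-coefficients and the only non-rationality lies in the auxiliary radicals $\lambda_i,\mu$, which are eliminated upon clearing denominators (or may be replaced by their symmetric functions), the resulting set is Zariski dense open and defined over $\Q$. With $\cU_\Delta$ in hand: for $f\in\cU_\Delta(\bar\Q)$ and $p$ large in $\cR$ we have $\tilde f\in\cU(\bar\Q)$, so Theorem \ref{T:character}(1) and Theorem \ref{T:GNP2}(1) establish all assertions of Part (1) for $\tilde f$; by the invariance above the same holds for $f$. Part (2) is proved identically over $\Q$-points, with $\cU^o$ and the $p$-adic normalization in place of $\cU$ and the $q$-adic one, applying Theorem \ref{T:character}(2) and Theorem \ref{T:GNP2}(2). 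Note that the single open set $\cU_\Delta$ works for all $m\geq1$ simultaneously (and for all nontrivial $\cR$) precisely because $\cU$ does and because character--independence bridges $\chi_1$ to every $\chi_m$.

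The main obstacle, I expect, is verifying that the scalar twist $f\mapsto\mu f$ genuinely preserves the Newton polygon of the $T$-adic $C_f^*$ (and of $C_f^*(\chi_m,s)$ for every $m$, not just $\chi_1$): unlike the monomial substitution and the translation this operation alters the underlying additive/Witt datum and must be shown harmless, either by tracing it through the Dwork complex and the splitting function $E_f$ of Section 3.1, or by reducing it to the Liu--Wan character--independence results used in Theorem \ref{T:GNP2}. A secondary, more bookkeeping-type point is making the descent of $\cU_\Delta$ to $\Q$ fully rigorous in the presence of the radicals $\lambda_i$ and $\mu$; the cleanest workaround is to rerun Lemma \ref{L:open} and Proposition \ref{P:gNP} directly on $\A_\Delta$ with the three corner coefficients carried along as unit parameters, producing the analogue of $G$ over $\Q$ \emph{ab initio} and bypassing the radicals entirely — in which case the only remaining input from the $\A$-case is the combinatorial identification of the generic polygon with $\GNP_L(\Delta,m,p)$.
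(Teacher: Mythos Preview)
Your approach is essentially the same as the paper's: the paper defines a ``natural projection'' $\iota:\A_\Delta\to\A$ normalizing the three corner coefficients to $1$, sets $\cU_\Delta:=\iota^{-1}(\cU)$ and $\cU_\Delta^o:=\iota^{-1}(\cU^o)$, and appeals directly to Theorem \ref{T:character}. You have unpacked what this $\iota$ actually is (monomial rescaling plus scalar twist) and correctly flagged the two points the paper leaves implicit --- invariance of the Newton polygons under the scalar twist (handled by the Transformation Lemma of \cite[Section 5]{Zh12}, which the paper already invokes in the proof of Proposition \ref{P:gNP}) and the $\Q$-rationality of the pullback --- while your suggested workaround of rerunning the argument directly on $\A_\Delta$ is precisely what the paper records in Remark \ref{R:mainthereom}.
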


\begin{proof}
Consider the natural projection map $\iota:\A_\Delta\rightarrow \A$
defined as follows:
For any $f$ in $\A_\Delta$, represented by $(a_\bv)_{\bv\in\Delta\cap\Z^2}$
such that $a_{d_1,d_2}, a_{d_1,0},a_{0,d_2}\neq 0$, 
is sent via $\iota$ to $(a_\bv)_{\bv\in\Delta\cap\Z^2}$
such that $a_{d_1,d_2}=a_{d_1,0}=a_{0,d_2}=1$.
Let $\cU_\Delta:=\iota^{-1}(\cU)$ and $\cU_\Delta^o:=\iota^{-1}(\cU^o)$.
We have $\cU_\Delta\subset \cU_\Delta^o$ are open dense subsets in $\A_\Delta$.
Then our statement follows from that in 
Theorem \ref{T:character}.
\end{proof}

\begin{remark}
\label{R:mainthereom}
One can prove this theorem for $\A_\Delta$ directly via similiar argument 
as that for that of $\A$ above in Theorem \ref{T:character}, which would 
be able to give an explicit description of the set $\cU_\Delta$ in an 
analogous fashion as that in Proposition \ref{P:gNP}.
We leave the details to the interested readers.
\end{remark}

\section{Generic eigencurves for $2$-variable Artin-Schreier-Witt towers }

Eigencurve for Artin-Schreier-Witt towers were introduced 
in \cite[Section 4]{DWX16}. 
Let $\cW_p$ denote the rigid analytic open unit disc associated to $\Zp[[T]]$.
Recall that eigencurve $\cA_f$ associated to the Artin-Schreier-Witt tower for 
$f=\sum_{\bv\in\Delta\cap\Z^2} a_\bv \bx^\bv$ 
is the zero locus of $C_f^*(T,s)$, viewed as a rigid analytic subspace of 
$\cW_p\times \Gmrig$, 
where $s$ is the coordinate of the second factor. Denote the natural projection 
to the first factor by $\wt_p: \cA_f\rightarrow \cW_p$.
Davis-Wan-Xiao showed that for a fixed prime $p$, when $f$ is one-variable of degree $d$, that for $m_\chi$ large enough 
the eigencurve $\cA_f$ is disjoint union of finite flat cover of the weight space $\cW_p$
each is zero locus of degree $d$.

\begin{proposition}\label{P:eigen}
Let notation be as in Theorem \ref{T:6}.
Let $f\in\cU_\Delta(\bar\Q)$ and let $p$ be large enough.  
Suppose for any $n\in I_D$ we assume that $\varepsilon_n\neq 0$.
\begin{enumerate}
\item Then 
$C_f^*(T,s)=\prod_{i=1}^{\infty} \prod_{n\in I_D} P_{n,i}(s)P_{n,i}^\vee(s)$
where $P_{n,i}(s)$ and $P_{n,i}^\vee(s)$ are polynomials 
in $\Zp[[T]][s]$ of degree  $i W_\Delta(n)$,
pure of $T^{a(p-1)}$-adic slope $i-1+\frac{n}{d_1d_2}
+\frac{\varepsilon_n}{p-1}$ and $i+1-\frac{n}{d_1d_2}-\frac{\varepsilon_n}{p-1}$,
 respectively. 

\item The eigencurve $\cA_f$ is an infinite disjoint union 
$\cA_f=\coprod_{i=1}^{\infty}\coprod_{n\in I_D}\cA_{f,n,i} \cA_{f,n,i}^\vee$ 
where each $\cA_{f,n,i}$ (and $\cA_{f,n,i}^\vee$ respectively) is of the zero locus of 
$P_{n,i}(s)$ (and $P_{n,i}^\vee(s)$ respectively).
\end{enumerate}
\end{proposition}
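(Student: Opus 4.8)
The plan is to deduce Proposition \ref{P:eigen} from the Newton polygon description already obtained in Theorem \ref{T:6}, together with the Weierstrass-preparation–type factorization of $C_f^*(T,s)$ over the ring $\Zp[[T]]$. First I would recall that, by Theorem \ref{T:6}(1) (with $m=1$), for $f\in\cU_\Delta(\bar\Q)$ and $p$ large enough we have $\NP_{T^{a(p-1)}}(C_f^*(T,s))=\GNP_C(\Delta,p)$, whose slopes are, by Remark \ref{Remark:vertices}(1),
$$
\coprod_{i=1}^{\infty}\coprod_{n\in I_D}\{i-1+\beta_n(p),\; i+1-\beta_n(p)\}^{i\,W_\Delta(n)},
$$
with $\beta_n(p)=\frac{n}{d_1d_2}+\frac{\varepsilon_n}{p-1}$. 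The hypothesis $\varepsilon_n\neq 0$ for all $n\in I_D$, combined with the nontriviality of $\cR$ and the already-noted inequality at the end of Remark \ref{Remark:vertices}(1), guarantees that for $p$ large enough all these slope values $i-1+\beta_n(p)$ and $i+1-\beta_n(p)$, as $n$ ranges over $I_D$ and $i\geq 1$, are \emph{pairwise distinct}. This separation is exactly what lets us split the Newton polygon into constituent slope-pure pieces.

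Next I would invoke the standard slope-factorization (Dwork–Monsky / Wan) for an entire power series over a complete valuation ring: since $C_f^*(T,s)\in 1+s\Zp[[T]][[s]]$ and its $T^{a(p-1)}$-adic Newton polygon has only finitely many segments of each given slope, one obtains a factorization $C_f^*(T,s)=\prod_\lambda C_f^{*,\lambda}(T,s)$ where $C_f^{*,\lambda}$ collects the part of slope $\lambda$; each factor is a polynomial in $s$ of degree equal to the horizontal length of the slope-$\lambda$ segment, with coefficients in the appropriate extension of $\Zp[[T]]$, and is "pure" of $T^{a(p-1)}$-slope $\lambda$. Because the slope-$\lambda$ segment here is a single lattice segment of horizontal length $i\,W_\Delta(n)$ with rational slope $i-1+\beta_n(p)$ (respectively $i+1-\beta_n(p)$) — and because the Newton polygon is the generic one, so the relevant minors of the Dwork matrix are units by Proposition \ref{P:dwork} — the factor is a genuine polynomial in $\Zp[[T]][s]$ (no denominators needed after clearing, and the leading/constant coefficients are units up to a power of $T$). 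Setting $P_{n,i}(s):=C_f^{*,\,i-1+\beta_n(p)}(T,s)$ and $P_{n,i}^\vee(s):=C_f^{*,\,i+1-\beta_n(p)}(T,s)$ gives part (1); the indexing is legitimate precisely because of the distinctness established in the previous paragraph, so no two segments collide and the product is over a well-defined index set.

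For part (2), I would simply observe that the eigencurve $\cA_f$ is by definition the zero locus of $C_f^*(T,s)$ inside $\cW_p\times\Gmrig$, and that the factorization $C_f^*(T,s)=\prod_{i,n}P_{n,i}(s)P_{n,i}^\vee(s)$ of part (1) expresses this zero locus as the union $\coprod_{i,n}\cA_{f,n,i}\cup\cA_{f,n,i}^\vee$ of the zero loci of the individual factors; that these pieces are \emph{disjoint} as rigid analytic subspaces (not merely set-theoretically) follows because the factors are pairwise coprime in $\Zp[[T]][s]$ — their slopes are distinct, hence they share no common zero over any point of $\cW_p$, so the ideals they generate are comaximal after inverting suitable elements. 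Each $P_{n,i}(s)$, being monic-up-to-a-$T$-power of degree $iW_\Delta(n)$ with unit leading coefficient, makes $\cA_{f,n,i}\to\cW_p$ a finite flat cover of that degree, which is the shape needed for Theorem \ref{T:eigen}. The main obstacle I anticipate is the bookkeeping in the slope-factorization step: one must check carefully that the generic-Newton-polygon hypothesis forces each slope segment to contribute an honest polynomial factor over $\Zp[[T]][s]$ (rather than over a ramified extension), and that purity of the $T^{a(p-1)}$-slope is preserved — this is where Proposition \ref{P:dwork}'s unit-minor statement and the Liu–Wan integrality facts (items (1)–(4) of Section 2.1) must be combined, and where the condition $\varepsilon_n\neq 0$ is genuinely used to prevent adjacent segments from merging.
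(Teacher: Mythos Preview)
Your proposal is correct and follows essentially the same route as the paper: invoke Theorem \ref{T:6} to identify $\NP_{T^{a(p-1)}}C_f^*(T,s)$ with $\GNP_C(\Delta,p)$, use the hypothesis $\varepsilon_n\neq 0$ together with Remark \ref{Remark:vertices}(1) to conclude that all slopes are pairwise distinct, apply Weierstrass preparation to split $C_f^*(T,s)$ into pure-slope polynomial factors, and then read off the decomposition of $\cA_f$ formally. Your write-up is more detailed than the paper's (which dispatches the whole argument in a few lines), particularly in justifying that the factors lie in $\Zp[[T]][s]$ and that the pieces of $\cA_f$ are disjoint, but the underlying strategy is identical.
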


\begin{proof}
By Theorem \ref{T:6}, the
slopes for the eigencurves $\cA_f$ are 
$\NP_{T^{a(p-1)}}C^*_f(T,s)=\GNP_C(\Delta,p)$.
Since $\varepsilon_n=0$ for all $n\in I_D$, 
and $p$ is large enough, all slopes in $\GNP_C(\Delta,p)$
described in Remark \ref{Remark:vertices}
are distinct. 
By elementary $p$-adic Weierstrass preparation theorem, 
$C_f^*(T,s)$ split accordingly. 
Let $P_{n,i}$ be its factor polynomial 
corresponding to slope-($i-1+\beta_n(p)$) segment, and 
$P_{n,i}^\vee$ to slope-($i+1-\beta_n(p)$) segment, 
both of degree $i W_\Delta(n)$. 
This proves Part (1). 
Part (2) follows formally by setting $\cA_{f,n,i}$ for 
the zero locus of $P_{n,s}(s)$ and $\cA^\vee_{f,n,i}$
for that of $P^\vee_{n,i}(s)$.
\end{proof}

The above splitting of $\cA_f$ depends on 
the shape of the $T$-adic Newton polygon of 
$C_f^*(T,s)$. 
When $f$ is not generic,
the Newton polygon goes up in general
and subsequently its breaking points will change accordingly.
The only permanent vertex points in this situation
are the right endpoint of the length-1 slope-0 segment,
and the left endpoint of the length-1 slope-2 
segment in $L_f^*(T,s)$. Slopes in between are 
in the interval $(0,2)$. This observation gives rise to the following 
theorem:

\begin{theorem}[Theorem \ref{T:eigen}]
\label{T:eigen4}
Let notation be as in Proposition \ref{P:eigen}.
Let $f\in \cU_\Delta(\bar\Q)$.
Then $\cA_f$ is an infinite disjoint union: 
$$
\cA_f=\cA_{f,0}\coprod_{i=1}^{\infty} \cA_{f,i}
$$
where $\cA_{f,i}$ is the zero locus of a polynomial 
of degree $(2i-1)d_1d_2+\delta_\Delta+1$ with $T$-adic slopes lying
in the interval $(i-1,i]$ with $\delta_\Delta=0$ 
or $W_\Delta(D)$, and $\cA_{f,0}$ is a closed point. 
\end{theorem}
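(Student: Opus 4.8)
The plan is to deduce Theorem~\ref{T:eigen4} from the finer splitting in Proposition~\ref{P:eigen} by coarsening its factorization into blocks according to the integer parts of slopes. First I would recall that by Theorem~\ref{T:6}(1) the $T^{a(p-1)}$-adic Newton polygon of $C_f^*(T,s)$ for generic $f$ is exactly $\GNP_C(\Delta,p)$, whose slopes with multiplicities are given by the boxed formula in Remark~\ref{Remark:vertices}(1): for each $i\geq 1$ and $n\in I_D$ one gets slopes $i-1+\beta_n(p)$ and $i+1-\beta_n(p)$, each with multiplicity $iW_\Delta(n)$. The key numerical observation is that, since $\cR$ is nontrivial and $p$ is large, each $\beta_n(p)=\frac{n}{D}+\frac{\varepsilon_n}{p-1}$ lies in the open interval $(0,1)$ unless $n=0$ or $n=D$ (the two cases where $W_\Delta(n)$ contributes a weight-$0$ or weight-$1$ point forced by $a_{0,0}=a_{d_1,d_2}=1$, giving $\beta_0=0$, $\beta_D=1$ up to the $\varepsilon$-correction); thus the slopes naturally partition by their ``integer shelf'', and $\cA_{f,0}$ is the single point corresponding to the length-$1$ slope-$0$ segment coming from $n=0$, $i=1$.

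Next I would group the factors of Proposition~\ref{P:eigen}(1) into the products $\cA_{f,i}$ whose associated polygon segment has slopes in $(i-1,i]$. Concretely, the slope-$(i-1+\beta_n(p))$ piece $P_{n,i}(s)$ for $0<\beta_n(p)<1$ has slopes in $(i-1,i)$; the dual piece $P_{n,i'}^\vee(s)$ of slope $i'+1-\beta_n(p)$ falls into the shelf $(i',i'+1)$ when $0<\beta_n(p)<1$, and lands exactly on the integer endpoint $i'$ precisely when $\beta_n(p)$ would be $1$, i.e.\ in the $n=D$ contribution (where $\varepsilon_D$ may or may not vanish). Assembling, the block $\cA_{f,i}$ is the zero locus of $\prod_{n\in I_D,\,0<\beta_n(p)}P_{n,i}(s)\cdot\prod_{n\in I_D,\,0<\beta_n(p)}P_{n,i-1}^\vee(s)$ together with possibly one extra factor of degree $W_\Delta(D)$ coming from the slope-exactly-$i$ segment. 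I would then compute the degree of $\cA_{f,i}$ by summing the degrees $iW_\Delta(n)$ and $(i-1)W_\Delta(n)$ over the relevant $n$: using $\sum_{n\in I_D}W_\Delta(n)=k_D=d_1d_2+1$ (with the two boundary points $n=0,D$ accounting for the $+1$ and the correction) one gets $i\,d_1d_2+(i-1)d_1d_2=(2i-1)d_1d_2$, plus the boundary adjustment of $1+\delta_\Delta$ with $\delta_\Delta\in\{0,W_\Delta(D)\}$ according to whether $\varepsilon_D=0$; this reproduces the stated degree $(2i-1)d_1d_2+\delta_\Delta+1$.

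The remaining point is to justify that this coarsening is an honest \emph{disjoint union of rigid analytic subspaces} and not merely an equality of degrees. For this I would invoke $p$-adic Weierstrass preparation exactly as in the proof of Proposition~\ref{P:eigen}: since the $T^{a(p-1)}$-adic Newton polygon of $C_f^*(T,s)$ has genuine breakpoints at every abscissa where the slope crosses an integer (this is where I would check, using $0<\beta_n(p)<1$ for $n\neq 0,D$ and largeness of $p$, that consecutive shelves are separated), $C_f^*(T,s)$ factors uniquely over $\Zp[[T]]$ into the blocks $P_{n,i}P_{n,i}^\vee$ and hence, regrouping, into the $\cA_{f,i}$ blocks, each finite flat over $\cW_p$ of the asserted degree and with $T$-adic slopes in $(i-1,i]$. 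The main obstacle I anticipate is purely bookkeeping: carefully tracking which of the ``dual'' factors $P_{n,i}^\vee$ belongs in shelf $i-1$ versus shelf $i$ (it depends on whether one uses the closed or open end of the interval, i.e.\ on the $n=D$ boundary term), and correctly identifying $\delta_\Delta$ with $0$ or $W_\Delta(D)$ depending on whether $\varepsilon_D$ vanishes --- a subtlety already flagged in Remark~\ref{Remark:vertices}(1) by the clause ``unless and only unless $\varepsilon_n=\varepsilon_{2D-n}=0$.'' Everything else is a direct consequence of Theorem~\ref{T:6} and Proposition~\ref{P:eigen}.
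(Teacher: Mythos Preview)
Your approach is essentially the paper's: identify the breakpoints of $\GNP_C(\Delta,p)$ where the slope crosses an integer, apply Weierstrass preparation there, and count multiplicities. The paper's proof is in fact nothing more than this---it defines $\delta_\Delta$ (as $W_\Delta(D)$ if $\varepsilon_D=0$ and $0$ otherwise), then says the result follows by counting slope multiplicities in $\GNP_C(\Delta,p)$ via Remark~\ref{Remark:vertices}.

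One caveat on your framing. You present the argument as ``coarsening'' the fine factorization of Proposition~\ref{P:eigen} into the pieces $P_{n,i}$, $P_{n,i}^\vee$, but Proposition~\ref{P:eigen} carries the extra hypothesis that $\varepsilon_n\neq 0$ for \emph{all} $n\in I_D$, which Theorem~\ref{T:eigen4} does not assume; indeed the case $\varepsilon_D=0$ is precisely what produces $\delta_\Delta=W_\Delta(D)$. So those fine factors need not exist in the generality required. The paper sidesteps this by never invoking Proposition~\ref{P:eigen}: it works directly at the coarse (integer-shelf) level, which is exactly what you do in your final paragraph when you re-apply Weierstrass preparation at the integer breakpoints. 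Your detour through the $P_{n,i}$'s is harmless for the degree count---that only needs the slope multiset of $\GNP_C(\Delta,p)$, available straight from Theorem~\ref{T:6}---but should be dropped from the logical skeleton of the proof.
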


\begin{proof}
Write $D=d_1d_2$.
Let $\delta_\Delta=W_\Delta(D)$ if $\varepsilon_{D}=0$ and 
$\delta_\Delta=0$ otherwise, so $2\delta_\Delta$ is 
the length of slope-1 segment in 
$\GNP_L(\Delta,p)$. 
Now our statement follows
by counting the slopes multiplicities in $\GNP_C(\Delta,p)$
(see Remark \ref{Remark:vertices}) following 
the paragraph above this theorem.
\end{proof}

The proof of Theorem \ref{T:eigen} does not take advantage of any special property of 
$\GNP_C(\Delta,p)$, this indicates that an analog of this theorem 
may hold generally for all prime $p$ and all $2$-variable $f$ in $\A(\bar\Q)$
when $m$ large enough, namely, $\cA_f$  
is a disjoint union of covers of degree 
$N,N+2d_1d_2, N+4d_1d_2, N+6d_1d_2,\cdots$
for some $N$ (with one exceptional part) over the weight space.

\end{document}